\newtheorem{MainTheorem}{Theorem}
\newtheorem{theorem}{Theorem}[section]
\newtheorem{lemma}[theorem]{Lemma}
\newtheorem{corollary}[theorem]{Corollary}
\newtheorem*{ST}{Sharkovski\u{\i} Theorem for maps from $\cSO$}
\theoremstyle{definition}
\newtheorem{definition}[theorem]{Definition}
\newtheorem{remark}[theorem]{Remark}
\newcommand{\openthickbox}{\leavevmode\hbox to.77778em{%
  \hfil\vrule width.175em
  \vbox to.675em{\hrule width.26em height.175em\vfil\hrule height.175em}%
  \vrule width.175em\hfil}}
\newcommand{\qedple}{\leavevmode\unskip\penalty9999\hbox{}\nobreak\hfill\quad\hbox{\openthickbox}}
\newcommand{\TeoremaAmbFinalMarcat}[1]{%
  \expandafter\gdef\csname end#1\endcsname{\qedple\@endtheorem}}
\newcommand{\evalat}[1]{\bigr\rvert_{#1}}
\newcommand{\set}[2]{\ensuremath{\{#1 \,\colon #2\}}}
\def\@map#1#2[#3]{\mbox{$#1 \colon #2 \longrightarrow #3$}}
\def\map#1#2{\@ifnextchar [{\@map{#1}{#2}}{\@map{#1}{#2}[#2]}}
\def\@chull#1[#2]{\ensuremath{\langle #1 \rangle_{_{#2}}}}
\def\chull#1{\@ifnextchar [{\@chull{#1}}{\ensuremath{\langle #1 \rangle}}}
\def\@ls#1#2{\@mathmeasure\z@\displaystyle{#2}
    \@mathmeasure\@ne\displaystyle{#1}\box\@ne\box\z@}
\def\gtso#1{\mathrel{\@ls{_{#1}}{ > }}}
\def\geso#1{\mathrel{\@ls{_{#1}}{ \ge }}}
\def\ltso#1{\mathrel{ < _{#1}}}
\def\Sho{\mbox{\tiny\textup{Sh}}}
\newcommand{\N}{\ensuremath{\mathbb{N}}}
\newcommand{\Z}{\ensuremath{\mathbb{Z}}}
\newcommand{\Q}{\ensuremath{\mathbb{Q}}}
\newcommand{\R}{\ensuremath{\mathbb{R}}}
\newcommand{\I}{\ensuremath{\mathbb{I}}}
\newcommand{\SI}{\ensuremath{\mathbb{S}^1}}
\newcommand{\cB}{\mathcal{B}}
\DeclareMathOperator{\Sign}{Sign}
\DeclareMathOperator{\Card}{Card}
\DeclareMathOperator{\diam}{diam}
\DeclareMathOperator{\Int}{Int}
\DeclareMathOperator{\Bd}{Bd}
\DeclareMathOperator{\Graph}{Graph}
\DeclareMathOperator{\tr}{tr}
\providecommand{\abs}[1]{\lvert#1\rvert}
\newcommand{\signedarrow}[2][\empty]{\nolinebreak[4]\xrightarrow[#1]{\:#2\:}\nolinebreak[4]}
\newcommand{\arrowequal}{\mathrel{=}\mathrel{\mkern-3.5mu}\mathrel{=}\mathrel{\mkern-2.5mu}\relbar\mathrel{\mkern-3.5mu}\rightarrow}
\newcommand{\signedarrowequal}[2][\empty]{\nolinebreak[4]\ifx#1\empty\overset{#2}{\arrowequal}\else\underset{#1}{\overset{#2}{\arrowequal}}\fi\nolinebreak[4]}
\newcommand{\cSO}{\ensuremath{\mathcal{S}(\Omega)}}
\newcommand{\cI}{\ensuremath{\mathcal{C}^{0}(\I,\I)}}
\newcommand{\rescont}[1][M]{G_{_{#1}}}
\title{Forcing and entropy of strip patterns \\ of quasiperiodic skew products in the cylinder}
\author[Ll. Alsed\`a]{Llu\'{\i}s Alsed\`a}
\address{Departament de Matem\`atiques,
Edifici Cc,
Universitat Aut\`onoma de Barcelona,
08913 Cerdanyola del Vall\`es,
Barcelona,
Spain}
\email{alseda@mat.uab.cat}
\author[F. Ma\~{n}osas]{Francesc Ma\~{n}osas}
\address{Departament de Matem\`atiques,
Edifici Cc,
Universitat Aut\`onoma de Barcelona,
08913 Cerdanyola del Vall\`es,
Barcelona,
Spain}
\email{manyosas@mat.uab.cat}
\author[L. Morales]{Leopoldo Morales}
\address{Departament de Matem\`atiques,
Edifici Cc,
Universitat Aut\`onoma de Barcelona,
08913 Cerdanyola del Vall\`es,
Barcelona,
Spain}
\email{mleo@mat.uab.cat}
\thanks{The authors have been partially supported by MINECO grant
numbers MTM2008-01486 and MTM2011-26995-C02-01}
\subjclass{Primary: 37C55, 34D08, 37C70}
\keywords{Quasiperiodically forced systems on the cylinder,
combinatorial dynamics, forcing entropy, irrational rotation}
\date{November 11, 2014}
\begin{document}
\begin{abstract}
We extend the results and techniques from \cite{FJJK} to study the
combinatorial dynamics (\emph{forcing}) and entropy of
quasiperiodically forced skew-products on the cylinder. For these maps
we prove that a cyclic permutation $\tau$ forces a cyclic permutation
$\nu$ as interval patterns if and only if $\tau$ forces $\nu$ as
cylinder patterns.
This result gives as a corollary the Sharkovski\u{\i} Theorem for
quasiperiodically forced skew-products on the cylinder proved in \cite{FJJK}.

Next, the notion of $s$-horseshoe is defined for quasiperiodically
forced skew-products on the cylinder and it is proved, as in the
interval case, that if a quasiperiodically forced skew-product on the
cylinder has an $s$-horseshoe then its topological entropy is larger
than or equals to $\log(s).$

Finally, if a quasiperiodically forced skew-product on the cylinder
has a periodic orbit with pattern $\tau,$ then $h(F) \ge h(f_{\tau}),$
where $f_{\tau}$ denotes the \emph{connect-the-dots} interval map over
a periodic orbit with pattern $\tau.$
This implies that if the period of $\tau$ is $2^n q$ with $n \ge 0$
and $q \ge 1$ odd, then $h(F) \ge \tfrac{\log(\lambda_q)}{2^n}$,
where $\lambda_1 = 1$ and,
for each $q \ge 3,$ $\lambda_q$ is the largest root of the
polynomial $x^q − 2x^{q−2} − 1.$
Moreover,
for every $m=2^n q$ with $n \ge 0$ and $q \ge 1$ odd,
there exists a quasiperiodically forced skew-product on the cylinder $F_m$
with a periodic orbit of period $m$ such that $h(F_m) = \tfrac{\log(\lambda_q)}{2^n}.$
This extends the analogous result for interval maps to
quasiperiodically forced skew-products on the cylinder.
\end{abstract}
\maketitle
\section{Introduction}
In this paper we want to study the  coexistence and implications
between periodic objects of maps on the cylinder
$\Omega = \SI\times \I,$ of the form:
\[
\map{F}{\begin{pmatrix} \theta \\ x\end{pmatrix}}[
   {\begin{pmatrix} R_\omega(\theta)\\f(\theta,x)\end{pmatrix}}
],
\]
where $\SI = \R / \Z$, $\I = [0,1]$,
$R_\omega(\theta) = \theta + \omega \pmod{1}$ with
$\omega \in \R \setminus \Q$
and $f(\theta,x) = f_{\theta}(x)$ is continuous on both variables.
To study this class of maps, in \cite{FJJK}, were developed clever
techniques that lead to a theorem of the Sharkovski\u{\i} type for
this class of maps and periodic orbits of appropriate objects.

We aim at extending these results and techniques to study the
combinatorial dynamics (\emph{forcing}) and entropy of the
skew-products from the class {\cSO} consisting on all maps of the
above type.

As already remarked in \cite{FJJK}, instead of $\SI$ we could take any
compact metric space~$\Theta$ that admits a minimal homeomorphism
$\map{R}{\Theta}$ such that
$R^{\ell}$ is minimal for every $\ell > 1$.
However, for simplicity and clarity we will remain in the class
$\cSO.$

Before stating the main results of this paper, we will recall the
extension of Sharkovski\u{\i} Theorem to {\cSO} from \cite{FJJK},
together with the necessary notation.
We start by clarifying the notion of a periodic orbit for maps from
{\cSO}. To this end we informally introduce some key notions that
will be defined more precisely in Section~\ref{secDefRes}.

Let $X$ be a compact metric space.
A subset $G \subset X$ is \emph{residual} if it
contains the intersection of a countable family of open dense subsets
in $X.$

In what follows, $\map{\pi}{\Omega}[\SI]$ will denote the
standard projection from $\Omega$ to the circle.

Instead of periodic points we use objects that project over the
whole~$\SI,$ called \emph{strips} in \cite[Definition~3.9]{FJJK}.
A \emph{strip in $\Omega$} is a closed set $B \subset \Omega$ such
that $\pi(B) = \SI$ (i.e., $B$ projects on the whole $\SI$) and
$\pi^{-1}(\theta) \cap B$ is a closed interval (perhaps degenerate to a point)
for every $\theta$ in a residual set of $\SI.$

Given two strips $A$ and $B,$ we will write $A < B$ and $A \le B$
(\cite[Definition~3.13]{FJJK}) if there exists a residual set
$G \subset \SI,$ such that
for every $(\theta,x) \in A \cap \pi^{-1}(G)$
and $(\theta,y) \in B \cap \pi^{-1}(G)$
it follows that $x < y$ and, respectively, $x \le y$.
We say that the strips $A$ and $B$ are
\emph{ordered}\footnote{This notion will be defined with greater detail
but equivalently in Definition~\ref{orden}.
We are giving here this less technical definition just to simplify this general section.}
(respectively \emph{weakly ordered})
if either $A < B$ or $A > B$
(respectively $A \le B$ or $A \ge B$).

Given $F \in \cSO$ and $n \in \N$, a strip $B \subset \Omega$ is
called \emph{$n$-periodic} for $F$ (\cite[Definition~3.15]{FJJK}),
if $F^{n}(B) = B$ and the image sets
$B,\ F(B),\ F^{2}(B),\dots, F^{n-1}(B)$
are pairwise disjoint and pairwise ordered.

To state the main theorem of \cite{FJJK} we need to recall the
\emph{Sharkovski\u{\i} Ordering} (\cite{Shar, Shartrans}).
The \emph{Sharkovski\u{\i} Ordering} is a linear ordering of
$\N$ defined as follows:
\begin{align*}
& 3 \gtso{\Sho} 5 \gtso{\Sho} 7 \gtso{\Sho} 9 \gtso{\Sho} \dots \gtso{\Sho} \\
& 2 \cdot 3 \gtso{\Sho} 2 \cdot 5 \gtso{\Sho} 2 \cdot 7 \gtso{\Sho} 2 \cdot 9 \gtso{\Sho} \dots \gtso{\Sho} \\
& 4 \cdot 3 \gtso{\Sho} 4 \cdot 5 \gtso{\Sho} 4 \cdot 7 \gtso{\Sho} 4 \cdot 9 \gtso{\Sho} \dots \gtso{\Sho}\\
& \hspace*{7em} \vdots \\
& 2^n \cdot 3  \gtso{\Sho}  2^n \cdot 5 \gtso{\Sho} 2^n \cdot 7 \gtso{\Sho} 2^n \cdot 9 \gtso{\Sho} \dots \gtso{\Sho} \\
& \hspace*{7em} \vdots \\
& \cdots \gtso{\Sho} 2^n \gtso{\Sho} \dots \gtso{\Sho}
  16 \gtso{\Sho} 8 \gtso{\Sho} 4 \gtso{\Sho} 2 \gtso{\Sho} 1.
\end{align*}
In the ordering $\geso{\Sho}$ the least element is 1 and the largest
one is 3. The supremum of the set $\{1,2,4,\dots,2^n,\dots\}$ does
not exist.

\begin{ST}[\cite{FJJK}]
Assume that the map $F \in \cSO$ has a $p$-periodic strip.
Then $F$ has a $q$-periodic strip for every $q \ltso{\Sho} p.$
\end{ST}

Our first main result (Theorem~\ref{teo-prin-A}) concerns the forcing
relation.
As we will see in detail, the \emph{strips patterns} of
periodic orbits of strips of maps from $\cSO$ can be formalized in a natural
way as cyclic permutations, as in the case of the periodic patterns
for interval maps.
Our first main result states that a cyclic permutation
$\tau$ forces a cyclic permutation $\nu$ as interval patterns if and
only if $\tau$ forces $\nu$ as strips patterns.

Since the Sharkovski\u{\i} Theorem in the interval follows from the
forcing relation, a corollary of Theorem~\ref{teo-prin-A} is
the Sharkovski\u{\i} Theorem for maps from $\cSO$.

Next, an $s$-horseshoe for maps from $\cSO$ can be defined also in a
natural way. Our second main result (Theorem~\ref{teo-prin-B}) states
that if a map $F \in \cSO$ has an $s$-horseshoe then $h(F)$, the
\emph{topological entropy of $F$}, satisfies $h(F) \ge \log(s).$
This is a generalization of the well known result for the interval.

The third main result of the paper (Theorem~\ref{teo-prin-C}) states
that if a map $F \in \cSO$ has a periodic orbit of strips with
strips pattern $\tau,$ then $h(F) \ge h(f_{\tau}),$
where $f_{\tau}$ denotes the \emph{connect-the-dots} interval map
over a periodic orbit with pattern $\tau$.
A corollary of this fact and the lower bounds of the
topological entropy of interval maps from \cite{BGMY} is that,
if the period of $\tau$ is $2^n q$ with $n \ge 0$ and $q \ge 1$ odd,
then $h(F) \ge \tfrac{\log(\lambda_q)}{2^n}$, where $\lambda_1 = 1$
and, for each $q \ge 3,$ $\lambda_q$ is the largest root of the
polynomial $x^q − 2x^{q−2} − 1.$
Moreover,
for every $m=2^n q$ with $n \ge 0$ and $q \ge 1$ odd,
there exists a quasiperiodically forced skew-product on the cylinder $F_m$
with a periodic orbit of strips of period $m$ such that
$h(F_m) = \tfrac{\log(\lambda_q)}{2^n}.$

The paper is organized as follows. In Section~\ref{secDefRes}
we introduce the notation and we state the results in detail and in
Section~\ref{ProofOfteo-prin-A} we prove Theorem~\ref{teo-prin-A}
Finally, in Section~\ref{ProofOfCandD} we prove
Theorems~\ref{teo-prin-B} and \ref{teo-prin-C}.

\section{Definitions and statements of results}\label{secDefRes}

We start by recalling the notion of interval pattern and related
results. Afterwards we will introduce the natural extension to the
class $\cSO$ by defining the cylinder patterns.

In what follows we will denote the class of continuous maps from the
interval $\I$ to itself by $\cI.$

\subsection{Interval patterns}\label{IntPat}

Given $f \in \cI,$ we say that $p \in \I$ is an $n$-periodic point of
$f$ if $f^{n}(p) = p$
and $f^{j}(p) \ne p$ for $j = 1,2,\dots,n-1.$
The set of points $\{p,f(p),f^{2}(p),\ldots,f^{n-1}(p)\}$ will be
called a \emph{periodic orbit}.
A periodic orbit $P = \{p_1,p_2,\ldots,p_n\}$ is said to have the
\emph{spatial labelling} if $p_{1} < p_{2} < \dots < p_{n}.$
In what follows, every periodic orbit will be assumed to have
the spatial labelling unless otherwise stated.

\begin{definition}[Interval pattern]
Let $P = \{p_{1} < p_{2} < \dotsb < p_{n}\}$ be a periodic orbit of a
map $ f \in \cI$ and let $\tau$ be a cyclic permutation over
$\{1,2,\ldots,n\}.$
The periodic orbit $P$ is said to have the
\emph{(periodic) interval pattern} $\tau$ if and only if
$f(p_{i}) = p_{\tau(i)}$ for $i = 1,2,\dots,n.$
The period of $P$, $n,$ will also be called the
\emph{period of $\tau$}.
\end{definition}

\begin{remark}
Every cyclic permutation can occur as interval pattern.
\end{remark}

To study the dynamics of functions from $\cI$ we introduce the
following ordering on the set of interval patterns.

\begin{definition}[Forcing]
Given two interval patterns $\tau$ and $\nu,$  we say that
\emph{$\tau$ forces $\nu,$ as interval patterns}, denoted by
$\tau \Longrightarrow_{\I} \nu,$
if and only if \emph{every} $f \in \cI$ that has a periodic
orbit with interval pattern~$\tau$ also has a periodic orbit
with interval pattern~$\nu.$ By \cite[Theorem~2.5]{ALM},
the relation $\Longrightarrow_{\I}$ is a partial ordering.
\end{definition}

Next we define a \emph{canonical map} for an interval pattern as
follows.

\begin{definition}[$\tau$-linear map]\label{connect-the-dots}
Let $f \in \cI$ and let $P = \{p_{_1},p_{_2},\dots,p_{_n}\}$ be an
$n$-periodic orbit of $f$ with the spatial labelling
($p_{_1} < p_{_2} < \dots < p_{_n}$).
We define the \emph{$P$-linear map $f_P$} as the unique map in $\cI$
such that
$f_P(p_{_i}) = f(p_{_i})$ for $i = 1,2,\ldots,n,$
$f_P$ is affine in each interval of the form $[p_i,p_{i+1}]$ for $i =
1,2,\ldots,n-1,$ and
$f_P$ is constant on each of the two connected components of $\I
\setminus [p_1,p_n]$.
The map $f_P$ is also called \emph{$P$-connect-the-dots map}.

Observe that the map $f_P$ is uniquely determined by $P$ and
$f\evalat{P}$.

Let $\tau$ be the pattern of the periodic orbit $P.$
The map $f_P$ will also be called a \emph{$\tau$-linear map} and
denoted by $f_{\tau}.$ Then the maps $f_\tau$ are not unique
but all maps $f_{\tau}\evalat{[\min P, \max P]}$ are topologically
conjugate and, thus, they have the same topological entropy and
periodic orbits.
\end{definition}

The next result is a useful characterization of the forcing relation
of interval patterns in terms of the $\tau$-linear maps.

\begin{theorem}[Characterization of the forcing
relation]\label{carat-forc}
Let $\tau$ and $\nu$ be two interval patterns.
Then, $\tau \Longrightarrow_{\I} \nu$ if and only if
$f_{\tau}$ has a periodic orbit with pattern~$\nu$.
\end{theorem}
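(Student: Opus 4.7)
The forward implication is essentially definitional: by construction $f_\tau \in \cI$ carries the periodic orbit $P$ of pattern $\tau$, so if $\tau \Longrightarrow_{\I} \nu$, then $f_\tau$ must carry a periodic orbit of pattern $\nu$ as well.

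For the converse, I would start from an arbitrary $f \in \cI$ with a periodic orbit $P = \{p_1 < p_2 < \dotsb < p_n\}$ of pattern $\tau$, together with a periodic orbit $Q$ of $f_\tau$ of pattern $\nu$, and aim to produce a periodic orbit of $f$ of pattern $\nu$. The first step is to form the $P$-linear map $f_P$ as in Definition~\ref{connect-the-dots}. Since $f_P$ depends only on $P$ and $f\evalat{P}$, and since $P$ has pattern $\tau$, the restrictions $f_P\evalat{[p_1,p_n]}$ and $f_\tau\evalat{[\min P,\max P]}$ are topologically conjugate, so $f_P$ inherits a periodic orbit $Q'$ of pattern $\nu$. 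The problem then reduces to transferring $Q'$ from $f_P$ to $f$ while preserving the pattern.

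For this transfer I would use the standard loop-of-intervals machinery. Partition $[p_1,p_n]$ into the basic intervals $I_i := [p_i,p_{i+1}]$; the orbit $Q'$ visits these intervals along a loop
\[
I_{j_0} \longrightarrow I_{j_1} \longrightarrow \dotsb \longrightarrow I_{j_{m-1}} \longrightarrow I_{j_0}
\]
of length $m$ (the period of $\nu$), with $f_P(I_{j_k}) \supset I_{j_{k+1}}$ for every $k$. Because $f$ and $f_P$ agree on $P$, the intermediate value theorem gives $f(I_{j_k}) \supset f_P(I_{j_k}) \supset I_{j_{k+1}}$ as well, so the same loop is admissible for $f$, and the Itinerary Lemma yields a point $q \in I_{j_0}$ with $f^m(q)=q$ and $f^k(q) \in I_{j_k}$ for all $k$.

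The hard part will be ensuring that the $f$-orbit of $q$ realizes the pattern $\nu$ \emph{exactly}, rather than some coarser pattern that merely follows the same sequence of basic intervals: each $I_i$ may be visited by several points of the orbit, and the bare loop of intervals does not distinguish them. To handle this I would refine each arrow $I_{j_k} \to I_{j_{k+1}}$ by recording the sign of the slope of the affine piece $f_P\evalat{I_{j_k}}$. Inside each $I_{j_k}$ the intermediate value theorem supplies a subinterval on which $f$ is monotone with the prescribed sign and whose image still contains $I_{j_{k+1}}$, so the decorated loop is also admissible for $f$. Running the itinerary argument along these monotone sub-branches then produces a periodic point of $f$ whose consecutive iterates occupy, inside each basic interval, the same relative positions as those of $Q'$ for $f_P$; the spatial permutation of the resulting orbit therefore coincides with that of $Q'$, so its pattern is precisely $\nu$. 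This monotonicity refinement of the naive Markov-loop construction is the technical core of the argument.
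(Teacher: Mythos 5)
A preliminary remark: the paper does not prove Theorem~\ref{carat-forc} at all; it is quoted as a classical characterization from the interval theory (cf.\ \cite{ALM}), and the same circle of ideas is what the paper redevelops in Section~\ref{ProofOfteo-prin-A} for the cylinder. Your overall route --- pass from an arbitrary $f$ with orbit $P$ of pattern $\tau$ to the $P$-linear map, read off the loop of basic intervals traced by the orbit $Q'$ of pattern $\nu$, and transfer it back to $f$ through the Markov machinery with signs --- is exactly the standard argument, and your forward implication is fine. But two steps of your plan, as written, do not work. First, the claim that inside each $I_{j_k}$ the intermediate value theorem gives a subinterval on which $f$ is \emph{monotone} with prescribed sign is false: a continuous map need not be monotone on any subinterval. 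What is true, and what the machinery actually uses, is the signed exact covering: since $f$ and $f_P$ agree on $P$, the signed arrow $I_{j_k} \signedarrow{s_k} I_{j_{k+1}}$ of Definition~\ref{sig-arrow} holds for $f$ as well, and one can choose $K \subset I_{j_k}$ with $f(K) = I_{j_{k+1}}$ and endpoints sent to endpoints according to $s_k$. Since no monotonicity is available, your final assertion that the iterates of the new point ``occupy the same relative positions'' as those of $Q'$ does not follow from monotone branches; it requires the lexicographic comparison of the shifts $S^i(\alpha)$ of the loop, applied on both sides (for $f_P$ this is Lemma~\ref{lazosintervalasociados}, and for $f$ one needs the interval analogue of Lemma~\ref{lazoasociadobandas}).

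Second, and more substantively, nothing in your plan guarantees that the periodic point of $f$ produced by the itinerary argument has period exactly $m$, nor that the shifts $S^i(\alpha)$ are pairwise distinct so that the order comparisons are strict. This is where \emph{simplicity} of the loop $\alpha$ associated to $Q'$ is indispensable: if $\alpha$ were a repetition of a shorter loop, the constructed point could close up with a strictly smaller period and exhibit a pattern different from $\nu$. Simplicity is not automatic and its proof genuinely uses the affineness of $f_P$ on basic intervals (this is the content of Lemma~\ref{lazo-unico}, which the paper proves precisely because it is needed for this kind of transfer); one must also first discard the trivial case $\nu = \tau$, since only for $\nu \ne \tau$ is $Q'$ disjoint from $P$, which gives existence and uniqueness of the associated loop. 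With these repairs --- signed exact coverings in place of monotone branches, simplicity of the loop, and the signed lexicographic ordering lemmas --- your outline becomes the standard proof; without them, the crucial step ``the pattern is precisely $\nu$'' is asserted rather than proved.
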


\subsection{Strips Theory}
In this subsection we introduce a new (more restrictive) kind of strips
with better properties and we study the basic properties that we will need
throughout the paper.
To introduce this new kind of strips we first need to introduce the notion of a
\emph{core} of a set.

Given a compact metric space $(X,d)$ we denote the set of all closed (compact)
subsets of $X$ by $2^X,$ and we endow this space with the Hausdorff metric
\begin{align*}
H_{d}(B,C)
  &= \max\{\max_{b\in B}\min_{c\in C} d(c,b),
     \max_{c\in C}\min_{b\in B}d (c,b)\}\\
  &= \max\{\max_{b\in B} d(b,C), \max_{c\in C} d(c,B)\}.
\end{align*}
It is well known that $(2^X, H_d)$ is compact.
Also, given a set $A$ we will denote the closure of $A$ by $\overline{A}$.

\begin{definition}[\cite{FJJK}]\label{core}
Let $M$ be a subset of $2^{\Omega}.$ We define the \emph{core of $M$}, denoted $M^c$, as
\[
   \bigcap_{G\in\mathcal{G}(\SI)} \overline{M\cap\pi^{-1}(G)},
\]
where $\mathcal{G}(\SI)$ denotes the set of all residual subsets of $\SI.$
Observe that if $M$ is compact, then
$M^c \subset M$ and,
$\pi(M) = \SI$ implies $\pi(M^c) = \SI.$
\end{definition}

This definition of \emph{core} is rather intricate.
Below we settle an equivalent and more useful definition
in the spirit of Lemma~3.2 and Remark~3.3 of \cite{FJJK}.
The role of the residual of continuity in this equivalent definition
is stated without proof in \cite{FJJK} and, hence,
we include the proof for completeness.

Let $M \in 2^{\Omega}$ be such that $\pi(M) = \SI.$
We define the map {\map{\phi_{_M}}{\SI}[2^{\I}]} by
$\phi_{_M}(\theta) := M^{\theta},$ and
$\rescont := \set{\theta \in \SI}{\text{$\phi_{_M}$ is continuous at $\theta$}}.$
It can be easily seen that $\phi_{_M}$ is upper semicontinuous (i.e.
for every $\theta \in \SI$ and every open $U \subset \I$ such that
$\phi_{_M}(\theta) \subset U,$ $\set{z \in \SI}{\phi_{_M}(z) \subset U}$
is open in $\SI$). Hence, by \cite[Theorem~7.10]{Choq},
the set $\rescont$ is residual.
The set $\rescont$ will be called the \emph{residual of continuity of $M$}.

Given $G \subset \SI$ and a map $\map{\varphi}{G}[2^{\I}]$,
$\Graph(\varphi) := \set{(\theta,\varphi(\theta))}{\theta \in G} \subset \SI \times 2^{\I}$
denotes the \emph{graph of $\varphi$}.
By abuse of notation we will identify $\Graph(\varphi)$
with the set $\bigcup_{\theta\in G} \{\theta\} \times \varphi(\theta)$.
Hence, we will consider $\Graph(\varphi)$ as a subset of $\Omega$ (or of $G \times \I$),
and $\overline{\Graph(\varphi)}$ is a compact subset of $\Omega.$

\begin{lemma}\label{CoreForResidualOfContinuity}
Let $M$ be a compact subset of $\Omega.$ Then,
\[
M^c = \overline{\Graph\left(\phi_{_M}\evalat{G}\right)} = \overline{M \cap \pi^{-1}(G)}
\]
for every residual set $G \subset \rescont.$
Moreover, $M \cap \pi^{-1}(G) = M^c \cap \pi^{-1}(G)$ and $\left(M^c\right)^c = M^c.$
\end{lemma}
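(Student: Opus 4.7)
The plan centers on the fact that on $\rescont$ the set-valued map $\phi_{_M}$ is continuous (both upper and lower semicontinuous) and that any residual subset of $\SI$ is dense in the Baire space $\SI$, with finite intersections of residuals again residual. The middle equality $\overline{\Graph(\phi_{_M}\evalat{G})} = \overline{M \cap \pi^{-1}(G)}$ is then immediate, since under the stated identification $\Graph(\phi_{_M}\evalat{G}) = \bigcup_{\theta\in G}\{\theta\}\times M^{\theta} = M \cap \pi^{-1}(G)$ as subsets of $\Omega$.

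For the nontrivial equality $M^c = \overline{M \cap \pi^{-1}(G)}$ with $G \subset \rescont$ residual, one inclusion is definitional: $G$ is itself one of the sets indexed in the intersection defining $M^c$. For the opposite inclusion I would prove the stronger claim that $\overline{M \cap \pi^{-1}(G)} \subset \overline{M \cap \pi^{-1}(G')}$ for \emph{every} residual $G' \subset \SI$. Given $(\theta,x)$ in the left-hand side, pick $(\theta_n,x_n) \in M \cap \pi^{-1}(G)$ converging to it. Since $\theta_n \in G \subset \rescont$, the multifunction $\phi_{_M}$ is continuous at $\theta_n$, and $G \cap G'$ is residual hence dense. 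For each $n$, pick $\theta'_n \in G \cap G'$ so close to $\theta_n$ that $H_d(\phi_{_M}(\theta'_n),\phi_{_M}(\theta_n)) < 1/n$, then $x'_n \in \phi_{_M}(\theta'_n) = M^{\theta'_n}$ with $|x'_n - x_n| < 1/n$. The resulting sequence $(\theta'_n,x'_n)$ lies in $M \cap \pi^{-1}(G')$ and converges to $(\theta,x)$.

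For the first moreover, $M^c \cap \pi^{-1}(G) \subset M \cap \pi^{-1}(G)$ is trivial from $M^c \subset M$. For the reverse inclusion, given $(\theta,x) \in M \cap \pi^{-1}(G)$ and any residual $G' \subset \SI$, apply continuity of $\phi_{_M}$ at $\theta \in G \subset \rescont$: pick $\theta_n \in G'$ with $\theta_n \to \theta$ (possible by density), then $x_n \in \phi_{_M}(\theta_n)$ with $x_n \to x$ using the Hausdorff convergence $\phi_{_M}(\theta_n) \to \phi_{_M}(\theta)$. This places $(\theta,x)$ in every $\overline{M \cap \pi^{-1}(G')}$, hence in $M^c$.

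Finally, for $(M^c)^c = M^c$, set $N = M^c$. One has $N^c \subset N$ at once, since $N$ is closed. Conversely, for $(\theta,x) \in N = \overline{M \cap \pi^{-1}(\rescont)}$ take $(\theta_n,x_n) \in M \cap \pi^{-1}(\rescont)$ converging to $(\theta,x)$; the first moreover yields $M \cap \pi^{-1}(\rescont) = N \cap \pi^{-1}(\rescont)$, and the same perturbation argument applied to these points and an arbitrary residual $G'$ produces a sequence in $N \cap \pi^{-1}(G')$ converging to $(\theta,x)$, so $(\theta,x) \in N^c$. The principal subtlety throughout is that \emph{lower} semicontinuity of $\phi_{_M}$ on $\rescont$, which is available only because $\phi_{_M}$ is \emph{continuous} there (upper semicontinuity is automatic on all of $\SI$), is precisely what promotes a perturbation of the base coordinate $\theta$ to a simultaneous perturbation of the fiber coordinate $x$; without full continuity the selection of $x'_n \in M^{\theta'_n}$ near $x_n$ would fail.
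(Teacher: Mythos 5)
Your proposal is correct and follows essentially the same route as the paper: the heart in both cases is the perturbation argument that uses Hausdorff-continuity of $\phi_{_M}$ at points of $\rescont$ together with the density of (intersections of) residual sets to push a point of $M$ over a fiber in any prescribed residual set, giving $\overline{M \cap \pi^{-1}(G)} \subset \overline{M \cap \pi^{-1}(G')}$ for all residual $G'$, with the reverse inclusion definitional. The only cosmetic differences are that you run the argument on approximating sequences (a diagonal variant of the paper's pointwise version) and that for $\left(M^c\right)^c = M^c$ you rerun the perturbation for $N = M^c$, whereas the paper simply reapplies the already-proven statements with the residual set $\rescont \cap \rescont[M^c]$; both are sound.
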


\begin{proof}
We start by proving the first statement of the lemma. Notice that if
\begin{equation}\label{CoreForResidualOfContinuity-eq1}
\overline{M \cap \pi^{−1}(\rescont)} \subset \overline{M \cap \pi^{−1}(H)}
\quad\text{for every $H\in \mathcal{G},$}
\end{equation}
then
\[
   \overline{M \cap \pi^{−1}(G)} \subset \overline{M \cap\pi^{−1}(\rescont)} \subset
   M^c = \bigcap_{H\in\mathcal{G}} \overline{M \cap \pi^{−1}(H)} \subset
   \overline{M \cap \pi^{−1} (G)}.
\]
Hence, we only have to prove \eqref{CoreForResidualOfContinuity-eq1}.

Let $H \in \mathcal{G}$ and let $(\theta, x) \in M \cap \pi^{−1}(\rescont)$
(i.e. $\theta \in \rescont$ and $(\theta,x) \in M^{\theta} = \phi_{_M}(\theta)$).
Since $H$ is residual, it is dense in $\SI.$
Therefore, there exists a sequence $\{\theta_n\}_{n=1}^{\infty} \subset H$ converging to $\theta.$
Since $\theta \in \rescont$, $\phi_{_M}$ is continuous in $\theta.$
So, $\lim \phi_{_M}(\theta_n) = \phi_{_M}(\theta)$
and, for every $\varepsilon > 0$ exists $N\in \N$ such that
$
  d\bigl((\theta,x), \phi_{_M}(\theta_n)\bigr) \le
  H_d(\phi_{_M}(\theta), \phi_{_M}(\theta_n)) < \varepsilon
$
for every $n \ge N.$
Since the sets $\phi_{_M}(\theta_n)$ are compact, for every $n\in \N,$
there exists
$
(\theta_n,x_n) \in \phi_{_M}(\theta_n) \subset M \cap \pi^{−1}(H)
$
such that
$
d\bigl((\theta,x), (\theta_n,x_n)\bigr) = d\bigl((\theta,x), \phi_{_M}(\theta_n)\bigr).
$
Thus, $\lim(\theta_n, x_n) = (\theta,x)$ and, hence,
$(\theta,x) \in \overline{M \cap \pi^{−1}(H)}.$
This implies
$
M \cap \pi^{−1}(\rescont) \subset \overline{M \cap \pi^{−1}(H)}
$
which, in turn, implies \eqref{CoreForResidualOfContinuity-eq1}.

By the first statement,
\[
 M \cap \pi^{-1}(G) \subset \overline{M \cap \pi^{-1}(G)} \cap \pi^{-1}(G) = M^c \cap \pi^{-1}(G) \subset M \cap \pi^{-1}(G).
\]
Now, to end the proof of the lemma, take $\widetilde{G} = \rescont \cap \rescont[M^c],$
which is a residual set. By the part of the lemma already proven we have,
\[
 \left(M^c\right)^c = \overline{M^c \cap \pi^{-1}\bigl(\widetilde{G}\bigr)} = \overline{M \cap \pi^{-1}\bigl(\widetilde{G}\bigr)} = M^c.
\]
\end{proof}

Now we are ready to define the notion of band.

\begin{definition}[Band]\label{definition-band}
Every strip $A \subset \Omega$ such that $A^c = A$ will be called a \emph{band}.
\end{definition}

\begin{remark}\label{pseudoband}
In view of Lemma~\ref{CoreForResidualOfContinuity}
a band could be equivalently defined as follows:
A \emph{band} is a set of the form $\overline{\Graph(\varphi)},$
where  $\varphi$ is a continuous map from a residual set of $\SI$
to the connected elements (intervals) of $2^{\I}.$
\end{remark}

Given $F \in \cSO,$ a strip $A$ is
\emph{$F-$invariant} if $F(A) \subset A$ and
\emph{$F-$strongly invariant} if $F(A) = A.$
As usual, the intersection of two $F-$invariant strips is
either empty or an $F-$invariant strip.
An invariant strip is called \emph{minimal} if it does not have a strictly contained invariant strip.

\begin{remark}\label{band-properties}
From Corollary~3.5 and Lemmas~3.10 and 3.11 of \cite{FJJK} it follows that the bands in $\Omega$
have the following properties for every map from $\cSO$:
\begin{enumerate}[(1)]
\item The image of a band is a band.
\item Every invariant strip contains an invariant minimal strip.
\item Every invariant minimal strip is a strongly invariant band.
\end{enumerate}
\end{remark}

Moreover, the Sharkovski\u{\i} Theorem for maps from $\cSO$ is indeed,

\begin{ST}[\cite{FJJK}]
Assume that $F \in \cSO$ has a $p$-periodic strip.
Then $F$ has a $q$-periodic band for every $q \ltso{\Sho} p.$
\end{ST}

Next we introduce a particular kind of bands that play a key role in this theory
since they allow us to better study and work with the bands.

Given a set $A \subset \Omega$ and $\theta \in \Omega$
we will denote the set $A \cap \pi^{-1}(\theta)$ by $A^{\theta}.$

\begin{definition}
A band $A$ is called \emph{solid} when $A^{\theta}$ is an interval for every $\theta \in \SI$ and
$\delta(A) := \inf\set{\diam(A^{\theta})}{\theta \in \SI} > 0.$
Also, $A$ is called \emph{pinched} if $A^{\theta}$ is a point for
each $\theta$ in a residual subset of $\SI.$
\end{definition}

\begin{remark}\label{solidorpseudocurve}
From \cite[Theorem~4.11]{FJJK} it follows that there are only two kind
of strongly invariant bands: \emph{solid} or \emph{pinched}.
\end{remark}

Despite of the fact that the above notion of pinched band is completely natural,
for several reasons that will become clear later (see also \cite{AMM})
we prefer to view the pinched bands as \emph{pseudo-curves} in the spirit of
Remark~\ref{pseudoband}:

\begin{definition}
Let $G$ be a residual set of $\SI$ and let $\map{\varphi}{G}[\I]$
be a continuous map from $G$ to $\I.$
The set $\overline{\Graph(\varphi)}$ will be called a \emph{pseudo-curve}.
\end{definition}

The next remark summarizes the basic properties of the pseudo-curves.

\begin{remark}\label{pseudocurves-properties}
The following properties of the pseudo-curves are easy to prove:
\begin{enumerate}[(1)]
\item Every pseudo-curve is a band.
In particular $\pi\left(\overline{\Graph(\varphi)}\right) = \SI.$
\item The image of a pseudo-curve is a pseudo-curve.
Moreover, every invariant pseudo-curve is strongly invariant and minimal.
\end{enumerate}
Now assume that $\overline{\Graph(\varphi)}$ is a pseudo-curve
where $\varphi$ is a map from $G$ to $\I.$ Then,
\begin{enumerate}[(1)]\setcounter{enumi}{2}
\item $\rescont[\overline{\Graph(\varphi)}] \supset G$ (see e.g. \cite[Lemma~7.2]{Nadler}).
\item $\overline{\Graph(\varphi)} \cap \pi^{-1}(G) = \Graph(\varphi).$
\end{enumerate}
\end{remark}

Next we want to define a partial ordering in the set of strips.
We recall that a map $g$ from $\SI$ to $\I$ is
\emph{lower semicontinuous} (respectively \emph{upper semicontinuous})
at $\theta \in \SI$ if
for every $\lambda < g(\theta)$ (respectively $ \lambda > g(\theta)$)
there exists a neighbourhood $V$ of $\theta$ such that
$\lambda < g(V)$ (respectively $\lambda > g(V)$).
When this condition holds at every point in $\SI$ $g$ is said to be
\emph{lower semicontinuous on $\SI$}
(respectively \emph{upper semicontinuous on $\SI$}).

\begin{definition}[\protect{\cite[Definition~4.1(a)]{FJJK}}]\label{tapas}
Given $A \in 2^{\Omega}$ such that $\pi(A) = \SI$ we define the
functions
\begin{align*}
M_{_A}(\theta) &:= \max\set{x \in \I}{(\theta,x) \in A} \\
m_{_A}(\theta) &:= \min\set{x \in \I}{(\theta,x) \in A}.
\end{align*}
It can be seen that
$M_{_A}$ is an upper semicontinuous function from $\SI$ to $\I$ and
$m_{_A}$ is a lower semicontinuous function from $\SI$ to $\I.$
From \cite[Theorem~7.10]{Choq},
each of the functions $m_{_{A}}$ and $M_{_{A}}$ is continuous on a residual
set of $\SI.$
We denote by $\rescont[m_{_{A}}]$ (respectively $\rescont[M_{_{A}}]$) the
residual set of continuity of $m_{_{A}}$ (respectively $M_{_{A}}$).
\end{definition}

\begin{remark}\label{ResConPC}
If $A$ is a pseudo-curve, it follows from \cite[Lemma~7.2]{Nadler} that
$
\rescont[A] = \rescont[M_{_{A}}] = \rescont[m_{_{A}}] = \set{\theta \in \SI}{M_{_A}(\theta) = m_{_A}(\theta)}
$
(that is, $A$ is pinched in $\rescont[A] = \rescont[M_{_{A}}] = \rescont[m_{_{A}}]$) and, hence,
\[
A = \overline{\Graph\left(M_{_{A}}\evalat{\rescont[M_{_{A}}]}\right)}
  = \overline{\Graph\left(m_{_{A}}\evalat{\rescont[m_{_{A}}]}\right)}.
\]
\end{remark}

\begin{definition}[\protect{\cite[Definition~3.13]{FJJK}}]\label{orden}
Given two strips $A$ and $B$ we write $A < B$ (respectively $A \le B$)
if there exists a residual set $G$ in $\SI$ such that
$M_{_{A}}(\theta) < m_{_{B}}(\theta)$
(respectively $M_{_{A}}(\theta) \le m_{_{B}}(\theta)$)
for all $\theta \in G.$
We say that two strips are
\emph{ordered} (respectively \emph{weakly ordered})
if either $A < B$ or $A > B$
(respectively $A \le B$ or $A \ge B$).
\end{definition}

\begin{remark}\label{orderedimpliesPDInt}
It follows from the definition that
two (weakly) ordered strips have pairwise disjoint interiors.
\end{remark}

The above ordering can be better formulated in terms of the \emph{covers} of a strip.

\begin{definition}\label{plusminus}
Let $A \subset \Omega$ be a strip.
We define the \emph{top cover of $A$} as the pseudo-curve defined by
$M_{_A}\evalat{\rescont[M_{_{A}}]}$:
\[
A^{+} := \overline{\Graph\left(M_{_A}\evalat{\rescont[M_{_{A}}]}\right)},
\]
and the \emph{bottom cover of $A$} as the pseudo-curve defined by
$m_{_A}\evalat{\rescont[m_{_{A}}]}$:
\[
A^{-} := \overline{\Graph\left(m_{_A}\evalat{\rescont[m_{_{A}}]}\right)}.
\]
\end{definition}

\begin{remark}\label{m-M-func}
The sets $A^{+}$ and $A^{-}$ are bands but in general do not coincide with
$\overline{\Graph\left(M_{_A}\right)}$ and $\overline{\Graph\left(m_{_A}\right)}$
respectively.
Moreover, if $A$ is a pseudo-curve then, from Remark~\ref{ResConPC}, $A^{+} = A^{-} = A.$
\end{remark}

\begin{remark}\label{ordentapas}
Let $A$ and $B$ be two strips.
By Remark~\ref{ResConPC} we have,
$A < B$ if and only if $A^{+} < B^{-}$ and
$A \le B$ if and only if $A^{+} \le B^{-}.$
\end{remark}

To end this subsection we introduce the useful notion of \emph{band between two pseudo-curves}.
Although this definition is inspired in the definition of a \emph{basic strip} from \cite{FJJK}
(see Definition~\ref{Markovsignedstrips}) we follow our approach based in pseudo-curves.

\newcommand{\rescontAB}{\rescont[M_{_A}] \cap \rescont[m_{_B}]}
\begin{definition}
Let $A$ and $B$ be pseudo-curves such that $A < B$. We define the \emph{band between $A$ and $B$} as:
\[
E_{_{AB}} := \overline{\bigcup_{\theta\in \rescontAB} \{\theta\} \times \left(M_{_{A}}(\theta), m_{_{B}}(\theta)\right)}.
\]
\end{definition}

The properties of the set $E_{_{AB}}$ are summarized by:

\begin{lemma}\label{GoodBands}
Let $A$ and $B$ be pseudo-curves such that $A < B$. Then,
\begin{enumerate}[(a)]
\item $E_{_{AB}}^{-} = A$ and $E_{_{AB}}^{+} = B.$
Moreover,
$\left(E_{_{AB}}\right)^{\theta} = \{\theta\} \times \left[M_{_{A}}(\theta), m_{_{B}}(\theta)\right]$
for every $\theta \in \rescontAB.$
\item $E_{_{AB}}$ is a band.
\item $E_{_{AB}} := \overline{\Int\left(E_{_{AB}}\right)}.$
In particular, $E_{_{AB}}$ has non-empty interior.
\end{enumerate}
\end{lemma}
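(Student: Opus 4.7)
The plan is to first derive the fiber identity in~(a) as a key lemma and then to obtain the three statements from it. Write $G := \rescontAB,$ which is residual. Since $A < B,$ Definition~\ref{orden} produces a residual set $G^{\ast} \subset \SI$ on which $M_{_A} < m_{_B};$ hence $G_{0} := G \cap G^{\ast}$ is also residual and, by the continuity of $M_{_A}$ and $m_{_B}$ on $G,$ one additionally has $M_{_A}(\theta) \le m_{_B}(\theta)$ for every $\theta \in G.$ Consider the set
\[
W := \{(\theta,x) \in \Omega : M_{_A}(\theta) < x < m_{_B}(\theta)\},
\]
which is open thanks to the upper semicontinuity of $M_{_A}$ and the lower semicontinuity of $m_{_B}$ (Definition~\ref{tapas}). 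A direct comparison with the definition gives $E_{_{AB}} = \overline{W \cap \pi^{-1}(G)}.$

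The crucial intermediate step is the fiber identity $(E_{_{AB}})^{\theta} = \{\theta\} \times [M_{_A}(\theta), m_{_B}(\theta)]$ for every $\theta \in G,$ which is the core of~(a). For "$\subset$" one passes to the limit in $M_{_A}(\theta_n) < x_n < m_{_B}(\theta_n)$ and invokes the continuity of $M_{_A}$ and $m_{_B}$ at $\theta \in G.$ For "$\supset,$" fix $x$ in the right-hand side and use density of $G_{0}$ to find $\theta_n \in G_{0}$ with $\theta_n \to \theta;$ by continuity the nonempty open intervals $(M_{_A}(\theta_n), m_{_B}(\theta_n))$ Hausdorff-converge to $[M_{_A}(\theta), m_{_B}(\theta)],$ so one can choose $x_n \in (M_{_A}(\theta_n), m_{_B}(\theta_n))$ with $x_n \to x,$ displaying $(\theta,x)$ as a limit of points of $W \cap \pi^{-1}(G) \subset E_{_{AB}}.$

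Part~(b) then follows quickly: the map $\varphi \colon G \to 2^{\I},$ $\varphi(\theta) := [M_{_A}(\theta), m_{_B}(\theta)],$ takes values in closed intervals of $\I$ and is Hausdorff-continuous (both endpoints are), and by the fiber identity $\Graph(\varphi) \subset E_{_{AB}} \subset \overline{\Graph(\varphi)},$ whence $E_{_{AB}} = \overline{\Graph(\varphi)};$ Remark~\ref{pseudoband} then identifies $E_{_{AB}}$ as a band. To complete~(a), the fiber identity also gives $m_{E_{_{AB}}}\evalat{G} = M_{_A}\evalat{G},$ and one must check $G \subset \rescont[m_{E_{_{AB}}}].$ Lower semicontinuity is automatic; for the upper bound at $\theta \in G$ with $M_{_A}(\theta) < m_{_B}(\theta),$ pick $y := M_{_A}(\theta) + \varepsilon$ in the open interval, note $(\theta,y) \in W,$ and deduce $m_{E_{_{AB}}}(\theta') \le y$ on a neighborhood of $\theta;$ letting $\varepsilon \to 0$ settles it, while the degenerate case $M_{_A}(\theta) = m_{_B}(\theta)$ is handled by the general upper semicontinuity of the fiber map $\phi_{E_{_{AB}}}$ at the singleton fiber. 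Setting $H := G \cap \rescont[A],$ which is residual and contained in $\rescont[m_{E_{_{AB}}}] \cap \rescont[m_{_A}],$ Remark~\ref{ResConPC} gives $M_{_A}\evalat{H} = m_{_A}\evalat{H},$ and Lemma~\ref{CoreForResidualOfContinuity} applied to the pseudo-curves $E_{_{AB}}^{-}$ and $A$ yields
\[
E_{_{AB}}^{-} = \overline{\Graph\bigl(m_{E_{_{AB}}}\evalat{H}\bigr)} = \overline{\Graph\bigl(m_{_A}\evalat{H}\bigr)} = A,
\]
and symmetrically $E_{_{AB}}^{+} = B.$

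Finally,~(c) is a direct consequence of the openness of $W.$ Every $(\theta,x) \in W$ is approximated, via density of $G$ and the preservation of the strict open inequalities by semicontinuity, by points $(\theta_n,x) \in W \cap \pi^{-1}(G),$ so $W \subset E_{_{AB}},$ and openness gives $W \subset \Int(E_{_{AB}}).$ Since $W$ is nonempty, so is $\Int(E_{_{AB}}),$ and the chain
\[
E_{_{AB}} = \overline{W \cap \pi^{-1}(G)} \subset \overline{W} \subset \overline{\Int(E_{_{AB}})} \subset E_{_{AB}}
\]
forces equality. The most delicate point in the plan is the verification $G \subset \rescont[m_{E_{_{AB}}}]$: the fiber identity only applies on $G,$ so a priori the behavior of $m_{E_{_{AB}}}$ at points of $\SI \setminus G$ could create jumps at points of $G;$ the open-set argument through $W$ is what sidesteps this issue cleanly.
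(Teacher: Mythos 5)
Your proposal is correct, but it takes a genuinely different route from the paper's at two of the three points, so let me compare. For part (a) the paper first proves the \emph{global} identities $m_{_{E_{_{AB}}}} = m_{_{A}}$ and $M_{_{E_{_{AB}}}} = M_{_{B}}$ on all of $\SI$ (one inequality from $A \subset E_{_{AB}}$, the reverse one from a limit argument plus lower semicontinuity of $m_{_{A}}$); since $E_{_{AB}}^{-}$ and $A$ are then pseudo-curves defined by literally the same function, $E_{_{AB}}^{-}=A$ is immediate from Definition~\ref{plusminus} and Remark~\ref{ResConPC}, no continuity of $m_{_{E_{_{AB}}}}$ has to be checked, and the fiber identity is deduced afterwards. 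You go the other way: you establish the fiber identity first, which pins down $m_{_{E_{_{AB}}}}$ and $M_{_{E_{_{AB}}}}$ only on $G=\rescontAB$, and you therefore need the extra verification $G \subset \rescont[m_{_{E_{_{AB}}}}]$; you correctly flag this as the delicate point, and your two-case argument (the open set $W$ for non-degenerate fibers, upper semicontinuity of the set-valued fiber map at pinched fibers) does close it, but it is work the paper's choice of order avoids. For part (b) the paper verifies the core condition $\left(E_{_{AB}}\right)^c = E_{_{AB}}$ directly via Lemma~\ref{CoreForResidualOfContinuity} and an approximation over $\rescontAB \cap \rescont[E_{_{AB}}]$, whereas you identify $E_{_{AB}} = \overline{\Graph(\varphi)}$ with $\varphi(\theta)=\left[M_{_{A}}(\theta), m_{_{B}}(\theta)\right]$ continuous on $G$ and invoke Remark~\ref{pseudoband}; that is shorter, but it leans on a remark the paper states without detailed proof (the paper's direct verification is essentially a proof of that remark in this instance), so be aware of what you are importing. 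For part (c) the two arguments are the same idea; your packaging through the globally open set $W$ (open by semicontinuity of $M_{_{A}}$ and $m_{_{B}}$, contained in $E_{_{AB}}$ by density of $G$, hence $W \subset \Int\left(E_{_{AB}}\right)$) is a slightly cleaner, more global version of the paper's pointwise box construction, and it is also what powers your continuity check in (a).
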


\begin{proof}
From the definition of $E_{_{AB}}$ it follows that
\[
\Graph\left(M_{_A}\evalat{\rescontAB}\right)
\subset E_{_{AB}}.
\]
Thus,
\[
A = A^c = \overline{\Graph\left(M_{_A}\evalat{\rescontAB}\right)} \subset E_{_{AB}}
\]
by Remarks~\ref{pseudocurves-properties}(1) and \ref{ResConPC} and Lemma~\ref{CoreForResidualOfContinuity}.
Consequently, $m_{_{E_{_{AB}}}} \le m_{_{A}}.$
Now we will prove that $m_{_{E_{_{AB}}}} \ge m_{_{A}}$ and, hence, $m_{_{E_{_{AB}}}} = m_{_{A}}.$
To see this note that, for every $\theta \in \SI,$ there exists a sequence
\[
 \{(\theta_n,x_n)\}_{n\in \N} \subset
 \bigcup_{\theta\in \rescontAB} \{\theta\} \times \left(M_{_{A}}(\theta), m_{_{B}}(\theta)\right)
\]
which converges to $(\theta, m_{_{E_{_{AB}}}}(\theta)).$
Observe that $x_n \ge M_{_{A}}(\theta_n) \ge m_{_{A}}(\theta_n)$ for every $n$.
Therefore, by Remark~\ref{ResConPC}
$
m_{_{E_{_{AB}}}}(\theta) = \lim_{n} x_n \ge \liminf_n m_{_{A}}(\theta_n) \ge m_A(\theta).
$

Since $m_{_{E_{_{AB}}}} = m_{_{A}}$, from Definition~\ref{plusminus} and Remark~\ref{ResConPC}
it follows that $E_{_{AB}}^{-} = A.$

In a similar way we get that $M_{_{E_{_{AB}}}} = M_{_{B}}$ and $E_{_{AB}}^{+} = B.$

Then, by the part already proven and Remark~\ref{ResConPC},
\begin{equation}\label{specialintervalsinEAB}
\left(E_{_{AB}}\right)^{\theta} = \{\theta\} \times \left[M_{_{A}}(\theta), m_{_{B}}(\theta)\right]\quad
\text{for every $\theta \in \rescontAB.$}
\end{equation}
This ends the proof of (a).

Now we prove (b).
From the previous statement it follows that $E_{_{AB}}$ is a strip.
Hence, we have to show that $\left(E_{_{AB}}\right)^c = E_{_{AB}}$
which, by Definition~\ref{core}, it reduces to prove that
$E_{_{AB}} \subset \left(E_{_{AB}}\right)^c.$
Moreover, it is enough to show that
\begin{equation}\label{corecontainswhatitshould}
E^{\theta}_{_{AB}} \subset \left(E_{_{AB}}\right)^c\quad
\text{for every $\theta \in \rescontAB$}
\end{equation}
because, by \eqref{specialintervalsinEAB},
\[
E_{_{AB}} \subset  \overline{\bigcup_{\theta \in \rescontAB} E^{\theta}_{_{AB}}}
          \subset \overline{\left(E_{_{AB}}\right)^c} = \left(E_{_{AB}}\right)^c .
\]

To prove \eqref{corecontainswhatitshould} observe that, since $\rescontAB \cap \rescont[E_{_{AB}}]$
is a residual set (contained in $\rescont[E_{_{AB}}]$), from Lemma~\ref{CoreForResidualOfContinuity} we get
\begin{equation}\label{essentialcoreofEAB}
 \left(E_{_{AB}}\right)^c =
 \overline{E_{_{AB}} \cap \pi^{-1}\left(\rescontAB \cap \rescont[E_{_{AB}}]\right)} =
  \overline{\bigcup_{\theta \in \rescontAB \cap \rescont[E_{_{AB}}]} E^{\theta}_{_{AB}}} .
\end{equation}
In particular,
\[
 \bigcup_{\theta \in \rescontAB \cap \rescont[E_{_{AB}}]} E^{\theta}_{_{AB}} \subset \left(E_{_{AB}}\right)^c .
\]

Fix $\theta \in \left(\rescontAB\right) \setminus \rescont[E_{_{AB}}].$
Since $\rescontAB \cap \rescont[E_{_{AB}}]$ is a residual set,
there exists a sequence $\{\theta_n\}_{n=1}^{\infty} \subset \rescontAB \cap \rescont[E_{_{AB}}]$
whose limit is $\theta.$ The continuity of the functions $M_{_{A}}$ and $m_{_{B}}$ in $\rescontAB$
implies that
$\lim M_{_{A}}(\theta_n) = M_{_{A}}(\theta)$
and
$\lim m_{_{B}}(\theta_n) = m_{_{B}}(\theta).$
Therefore, again by \eqref{specialintervalsinEAB},
every point of $E_{_{AB}}^{\theta}$ is limit of points in
$\{E_{_{AB}}^{\theta_n}\}_{n=1}^{\infty}.$
This implies that $E^{\theta}_{_{AB}} \subset \left(E_{_{AB}}\right)^c$ by \eqref{essentialcoreofEAB}.
This ends the proof of (b).

To prove (c) observe that
$\overline{\Int\left(E_{_{AB}}\right)} \subset E_{_{AB}}.$
So, it is enough to show that
\[
\bigcup_{\theta\in \rescontAB} \{\theta\} \times \left(M_{_{A}}(\theta), m_{_{B}}(\theta)\right)
\subset \Int\left(E_{_{AB}}\right).
\]

Take
$(\theta, x) \in \{\theta\} \times \left(M_{_{A}}(\theta), m_{_{B}}(\theta)\right)$
with $\theta \in \rescontAB.$
Since $x \ne M_{_{A}}(\theta)$ and $x \ne m_{_{B}}(\theta),$
there exists $\varepsilon > 0$ such that
$x > M_{_{A}}(\theta) + \varepsilon$ and $x < m_{_{B}}(\theta) - \varepsilon.$
On the other hand, the continuity of $M_{_{A}}$ and $m_{_{B}}$ on
$\rescontAB$
implies that there exist $\delta > 0$ such that
$\theta' \in \rescontAB$ and  $|\theta - \theta'| < \delta$ implies
$|M_{_{A}}(\theta) - M_{_{A}}(\theta')| < \varepsilon$ and $|m_{_{B}}(\theta) - m_{_{B}}(\theta')| < \varepsilon.$
Now we define
\[
 r := \min \left\{\delta, |x-M_{_{A}}(\theta)-\varepsilon|, |x-m_{_{B}}(\theta)+ \varepsilon| \right\} > 0.
\]
Observe that, with this choice of $r,$ $M_{_{A}}(\theta) + \varepsilon \le x -r < x+r \le m_{_{B}}(\theta) - \varepsilon.$

Let
$
U := \set{(\theta', y) \in \Omega}{|\theta - \theta'| < r \text{ and }  |x-y| < r}
$
be an open neighbourhood of $(\theta, x).$
We will prove that every $(\theta', y) \in U$ belongs to $E_{_{AB}}.$
If $\theta' \in \rescontAB,$
from the choice of $\delta$ and $r,$ it follows that
$
(\theta', y) \in \{\theta'\} \times (x-r,x+r) \subset \{\theta'\} \times \left[M_{_{A}}(\theta'), m_{_{B}}(\theta')\right] \subset E_{_{AB}}.
$
Now assume that $\theta' \notin \rescontAB$
and consider a sequence
$\{\theta_n\}_{n \in \N} \subset \rescontAB \cap (\theta-r, \theta+r)$
converging to $\theta'.$
Clearly, $(\theta_n, y) \in U$ for every $n\in \N$ and,
by the part already proven, $(\theta_n, y) \in E_{_{AB}}.$
Consequently, since $E_{_{AB}}$ is closed,
$(\theta', y) = \lim (\theta_n, y) \in E_{_{AB}}.$
\end{proof}

\subsection{Strip patterns}

In this subsection we define the notion of strips pattern and forcing
for maps from $\cSO$ along the lines of Subsection~\ref{IntPat}.

\begin{definition}[\protect{\cite[Definition~3.15]{FJJK}}]\label{def-ban}
Let $F \in \cSO.$ We say that a strip $A \subseteq \Omega$
is a \emph{$p$-periodic strip}
if $F^{p}(A) = A$ and the strips $A,F(A),\ldots, F^{p-1}(A)$
are pairwise disjoint and ordered.
The set $\{A,F(A),\ldots, F^{p-1}(A)\}$ is called an
\emph{$n$-periodic orbit of strips}.

By Remarks~\ref{band-properties} and \ref{solidorpseudocurve}, it follows that we can restrict our attention to
two kind of periodic orbit of bands: the solid ones and the pseudo-curves.
\end{definition}

A periodic orbit of strips $\{B_1,B_2,\ldots, B_p\}$
is said to have the \emph{spatial labelling} if
$B_1 < B_2 < \ldots < B_p.$
In what follows we will assume that every periodic orbit of strips
has the spatial labelling.

\begin{definition}[Strip pattern]
Let $F \in \cSO$ and let $\cB = \{B_1,B_2,\dots,B_n\}$ be a
periodic orbit of strips.
The \emph{strips pattern of $\cB$} is the permutation $\tau$ such that
$F(B_{i}) = B_{\tau(i)}$ for every $i = 1,2,\dotsc,n.$

When a map $F \in \cSO$ has a periodic orbit of strips with
strips pattern $\tau$ we say that \emph{$F$ exhibits the pattern $\tau$.}
\end{definition}

\begin{remark}
Interval and strips patterns are formally the same algebraic objects;
that is cyclic permutations.
\end{remark}

\begin{definition}[Forcing]
Let $\tau$ and $\nu$ be strips patterns.
We say that $\tau$ forces $\nu$ in $\Omega,$
denoted by $\tau \Longrightarrow_{\Omega} \nu,$
if and only if every map $F \in \cSO$
that exhibits the strips pattern~$\tau$ also exhibits the quasiperiodic
pattern~$\nu$.
\end{definition}

The next theorem is the first main result of this paper.
It characterizes the relation $\Longrightarrow_{\Omega}$
by comparison with $\Longrightarrow_{\I}.$

\begin{MainTheorem}\label{teo-prin-A}
Let $\tau$ and $\nu$ be patterns (both in $\I$ and $\Omega$).
Then,
\[
\tau\Longrightarrow_{\I}\nu
  \quad\text{if and only if}\quad
\tau\Longrightarrow_{\Omega}\nu.
\]
\end{MainTheorem}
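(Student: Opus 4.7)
I would prove the two implications of Theorem~\ref{teo-prin-A} separately, starting with the direction $\tau\Longrightarrow_{\Omega}\nu \Rightarrow \tau\Longrightarrow_{\I}\nu$, which should be the easier one. Let $f \in \cI$ exhibit pattern $\tau$ at a periodic orbit $P = \{p_1 < \dots < p_n\}$ and consider the product skew-product $F(\theta,x) := (R_\omega(\theta), f(x)) \in \cSO$. The horizontal pseudo-curves $\SI \times \{p_i\}$ form a $\tau$-orbit of strips of $F$, so by hypothesis $F$ exhibits $\nu$ through some strip orbit $\{B_1 < \dots < B_k\}$ with $F(B_i) = B_{\nu(i)}$. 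To extract a $\nu$-orbit of $f$ I would fix a very generic $\theta^* \in \SI$, lying in the intersection of the continuity residuals of all $m_{B_i}$, $M_{B_i}$ and the strip-order residuals of Definition~\ref{orden}, and consider the fibre intervals $I_i := [m_{B_i}(\theta^*), M_{B_i}(\theta^*)]$, which are then strictly ordered. The product structure $F = R_\omega \times f$ yields $f(I_i) \subseteq I'_{\nu(i)}$, where $I'_j$ is the analogous interval at $\theta^*+\omega$; iterating and using the minimality of $R_\omega^k$ to return $\theta^* + mk\omega$ arbitrarily close to $\theta^*$, a fixed-point argument for $f^{mk}$ followed by a Hausdorff-limit extraction should produce a periodic orbit of $f$ realising pattern~$\nu$.

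For the main direction $\tau\Longrightarrow_{\I}\nu \Rightarrow \tau\Longrightarrow_{\Omega}\nu$, I would transpose the Markov-graph proof from the interval case to $\Omega$. Given $F \in \cSO$ with $\tau$-orbit $\{A_1 < \dots < A_n\}$, apply Lemma~\ref{GoodBands} to each pair of consecutive covers $A_i^+ < A_{i+1}^-$, which are ordered by Remark~\ref{ordentapas}, to obtain bands $E_i := E_{A_i^+ A_{i+1}^-}$ playing the role of the Markov intervals $[p_i, p_{i+1}]$ for $f_\tau$. The key structural claim to establish is that $F$ acts on the $E_i$ by the same covering relation as $f_\tau$ on the $[p_i, p_{i+1}]$: whenever $f_\tau([p_i, p_{i+1}]) \supseteq [p_j, p_{j+1}]$, one should have $\bigl(F(E_i)\bigr)^- \le E_j^-$ and $\bigl(F(E_i)\bigr)^+ \ge E_j^+$. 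This will be derived from $F(A_i) = A_{\tau(i)}$, the fact that images of pseudo-curves are pseudo-curves (Remark~\ref{pseudocurves-properties}(2)), and the cover characterisation of ordering in Remark~\ref{ordentapas}.

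By Theorem~\ref{carat-forc}, the assumption $\tau\Longrightarrow_{\I}\nu$ produces a periodic orbit of $f_\tau$ with pattern~$\nu$, equivalently an admissible periodic itinerary $(i_0, i_1, \dots, i_{k-1})$ of length $k$ in the interval Markov graph. I would lift this to a $\nu$-orbit of strips for $F$ via the standard preimage cascade
\[
 C_m := E_{i_0} \cap F^{-1}(E_{i_1}) \cap \cdots \cap F^{-(m-1)}(E_{i_{(m-1)\bmod k}}),
\]
which forms a decreasing sequence of non-empty compact subsets of $E_{i_0}$, non-emptiness following from the Markov covering by backward induction. Compactness gives $C_\infty := \bigcap_m C_m \neq \emptyset$, and passing to its core via Lemma~\ref{CoreForResidualOfContinuity} yields a band $B \subseteq E_{i_0}$ satisfying $F^k(B) = B$ whose $F$-orbit sits inside the prescribed $E_{i_j}$'s box-by-box, and is therefore ordered according to~$\nu$.

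The main obstacle is the forward direction, where two delicate points require attention: first, establishing the Markov covering relation when the strips $A_i$ are solid bands rather than pseudo-curves, since then $F$ need not act monotonically on fibres and the covers $A_i^\pm$ can be mapped in non-obvious ways inside $A_{\tau(i)}$; and second, verifying that the core of the cascade limit yields a band $B$ whose $F$-orbit consists of \emph{strictly} pairwise ordered strips, so that the pattern is exactly~$\nu$ rather than a factor or merging of it. Both rely on Lemma~\ref{GoodBands}(c) (each $E_i$ has non-empty interior) propagating through preimages and the core operator, together with Remark~\ref{ordentapas} converting ordering of covers into ordering of the corresponding bands.
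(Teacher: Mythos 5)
The main gap is in the direction $\tau\Longrightarrow_{\I}\nu \;\Rightarrow\; \tau\Longrightarrow_{\Omega}\nu$, at your preimage cascade. The set $C_\infty=\bigcap_m C_m$ is indeed a non-empty compact set which is $F^k$-invariant, but nothing in the construction makes it (or its core) a \emph{strip}: the set of points following a prescribed itinerary is typically not fibrewise an interval over any residual set, and the core operator of Lemma~\ref{CoreForResidualOfContinuity} does not repair this, so "passing to the core yields a band $B$ with $F^k(B)=B$" is unjustified. In $\Omega$ there is no intermediate-value/fixed-point theorem that converts covering relations into an invariant object; this is exactly why the paper has to import the FJJK machinery reproduced in Lemma~\ref{prop-band}: (a.3) replaces each covering arrow by an \emph{onto} sub-band, and (b) extracts from a self-covering band a strongly invariant sub-band, and it is only through this route (Lemma~\ref{lazoasociadobandas}) that a genuine periodic band inside the itinerary is obtained. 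Moreover, even granting such a $B$, your itinerary is unsigned, so you have no mechanism to show that $B,F(B),\dots,F^{k-1}(B)$ are pairwise disjoint and \emph{ordered}, that the period is exactly $k$, and that the resulting spatial pattern is $\nu$ rather than a shorter or merged pattern. You flag this as a delicate point, but the proposed fix (Lemma~\ref{GoodBands}(c) plus Remark~\ref{ordentapas} "propagating through preimages") is not an argument. The paper settles precisely this with the signed structure: since $\nu\ne\tau$ the loop associated to $q_0$ is \emph{simple} (Lemma~\ref{lazo-unico}), its shifts are pairwise distinct (Lemma~\ref{simpleisdifferent}), and Lemma~\ref{lazoasociadobandas} constructs the periodic band so that the order of the $F^i(Q_0)$ coincides with the sign-weighted lexicographic order of the shifts $S^i(\alpha)$, which by Lemma~\ref{lazosintervalasociados} coincides with the order of the $f_\tau^i(q_0)$; this is what yields exact period $n$ and pattern exactly $\nu$, and it has no counterpart in your plan.

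The converse direction also has a gap at the extraction step. At a generic fibre you do get $f\bigl(B_i^{\theta^*}\bigr)=B_{\nu(i)}^{\theta^*+\omega}$, but the fibre intervals at the fixed $\theta^*$ carry no invariance under $f^{mk}$: along return times you only know that $f^{m_jk}(I_{i_0})$ converges in Hausdorff distance to $I_{i_0}$, and no fixed-point argument applies to "approximately returning" intervals, nor does a Hausdorff limit by itself produce an orbit realising the pattern $\nu$ ("should produce" is where the proof stops). The paper's Lemma~\ref{patternequiv} supplies the missing ingredient: for an uncoupled map every periodic strip is horizontal, $B_i=\SI\times J_i$, so $f(J_i)=J_{\nu(i)}$ holds exactly and a fixed point of $f^n$ in $J_1$ gives the $\nu$-orbit; moreover the paper only needs to apply this to the single map $F_\tau$ and then conclude via Theorem~\ref{carat-forc}, rather than to an arbitrary product $R_\omega\times f$. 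Either prove the horizontality statement (or quote it) or replace your fibrewise extraction by it; as written, both implications are incomplete.
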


The first important consequence of Theorem~\ref{teo-prin-A} is the
next result which follows from the fact that the Sharkovski\u{\i}
theorem is a corollary of the forcing relation for interval maps.

\begin{corollary}
The Sharkovski\u{\i} Theorem for maps from $\cSO$ holds.
\end{corollary}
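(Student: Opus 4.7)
The plan is to derive the Sharkovski\u{\i} Theorem for $\cSO$ by combining Theorem~\ref{teo-prin-A} with the classical interval Sharkovski\u{\i} Theorem, invoked via the $\tau$-linear characterization of Theorem~\ref{carat-forc}. The statement to prove is: if $F \in \cSO$ has a $p$-periodic strip, then $F$ has a $q$-periodic band for every $q \ltso{\Sho} p.$

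First I would refine the given $p$-periodic strip $A$ into a $p$-periodic \emph{band}. Since $A$ is $F^p$-invariant, Remark~\ref{band-properties}(2) provides a minimal $F^p$-invariant strip $\widetilde A \subset A,$ which by Remark~\ref{band-properties}(3) is a strongly $F^p$-invariant band. By Remark~\ref{band-properties}(1) each $F^k(\widetilde A)$ is again a band, and the inclusions $F^k(\widetilde A) \subset F^k(A)$ together with the pointwise inequalities $M_{F^k(\widetilde A)} \le M_{F^k(A)}$ and $m_{F^k(\widetilde A)} \ge m_{F^k(A)}$ force $\widetilde A, F(\widetilde A), \dots, F^{p-1}(\widetilde A)$ to inherit, via Definition~\ref{orden}, the pairwise disjointness and strict ordering of the orbit of $A.$ Thus $\widetilde A$ is a $p$-periodic band. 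Let $\tau$ be its strips pattern; it is a cyclic permutation of period $p.$

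Now fix an arbitrary $q \ltso{\Sho} p$ and consider the map $f_{\tau} \in \cI.$ By Definition~\ref{connect-the-dots}, $f_{\tau}$ possesses a periodic orbit with pattern $\tau$ and hence a period-$p$ point, so the classical Sharkovski\u{\i} Theorem for interval maps produces a period-$q$ point of $f_{\tau}.$ Its orbit has some interval pattern $\nu$ of period $q,$ and Theorem~\ref{carat-forc} yields $\tau \Longrightarrow_{\I} \nu.$ Theorem~\ref{teo-prin-A} then gives $\tau \Longrightarrow_{\Omega} \nu,$ so $F$ exhibits $\nu,$ that is, $F$ carries a $q$-periodic orbit of strips. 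Running the band-refinement of the previous paragraph on any strip of this new orbit (using $F^q$ in place of $F^p$) produces the desired $q$-periodic band.

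There is no genuine obstacle here: the proof is essentially a translation of the classical interval Sharkovski\u{\i} Theorem through Theorem~\ref{teo-prin-A}, where the real work lies. The only small point requiring a line of verification is the inheritance of the strict ordering when passing to a sub-band, but this is immediate from the monotonicity of $M_{_{(\cdot)}}$ and $m_{_{(\cdot)}}$ under inclusion together with Definition~\ref{orden}.
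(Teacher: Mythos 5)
Your proof is correct and follows essentially the same route as the paper's: establish $\tau \Longrightarrow_{\I}\nu$ for some pattern $\nu$ of period $q$ (the paper cites \cite[Corollary~2.7.4]{ALM}, while you invoke the classical interval Sharkovski\u{\i} Theorem together with Theorem~\ref{carat-forc}, which amounts to the same thing), transfer it to $\Omega$ via Theorem~\ref{teo-prin-A}, and pass from the resulting $q$-periodic orbit of strips to a $q$-periodic band using Remark~\ref{band-properties}(2,3). Your preliminary refinement of the given $p$-periodic strip to a band is harmless but not needed, since a $p$-periodic strip already determines a strips pattern $\tau$.
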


\begin{proof}
Assume that $F \in \cSO$ exhibits a $p$-periodic strips pattern
$\tau$ and let $q \in \N$ be such that $p \gtso{\Sho} q.$
By \cite[Corollary~2.7.4]{ALM}, $\tau \Longrightarrow_{\I} \nu$ for
some strips pattern $\nu$ of period $q$. Then, by
Theorem~\ref{teo-prin-A}, $\tau \Longrightarrow_{\Omega} \nu$ and, by
definition, $F$ also has a $q$-periodic orbit of strips (with strips pattern
$\nu$). Then the corollary follows from Remark~\ref{band-properties}(2,3).
\end{proof}

Next we are going to study the relation between the forcing relation
and the topological entropy of maps from $\cSO$. To this end we
introduce the notion of horseshoe in $\cSO.$

Let $F \in \cSO$ and let $A$ and $B$ be bands in $\Omega.$
We say that $A$ \emph{$F$-covers} $B$ if either
$F(A^{-}) \le B^{-}$ and $F(A^{+}) \ge B^{+},$ or
$F(A^{-}) \ge B^{+}$ and $F(A^{+}) \le B^{-}.$

\begin{definition}[Horseshoe]
An \emph{$s$-horseshoe} for a map $F \in \cSO$ is a pair
$(J,\mathcal{D})$ where $J$~is a band and
$\mathcal{D}$ is a set of $s \ge 2$ pairwise weakly ordered bands,
each of them with non-empty interior,
such that $L$ $F-$covers $J$ for every $L \in \mathcal{D}.$
Observe that, by Remark~\ref{orderedimpliesPDInt},
the elements of $\mathcal{D}$ have pairwise disjoint interiors.
\end{definition}

The next theorem is the second main result of the paper. It
relates the topological entropy of maps from $\cSO$ with
horseshoes.

\begin{MainTheorem}\label{teo-prin-B}
Assume that $F \in \cSO$ has an $s$-horseshoe. Then
\[
h(F) \ge \log(s).
\]
\end{MainTheorem}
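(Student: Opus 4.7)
The plan is to mimic the classical interval-case proof of the horseshoe--entropy bound: for every itinerary $\mathbf{i}=(i_{0},\dots,i_{n-1})\in\{1,\dots,s\}^{n}$, I would build a sub-band of $L_{i_{0}}$ with nonempty interior whose first $n$ iterates visit $L_{i_{0}},L_{i_{1}},\dots,L_{i_{n-1}}$ in order, and then recover $s^{n}$ mutually $(n,\varepsilon)$-separated orbits from these bands.

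The key technical ingredient is a pullback-covering lemma: if a band $A$ $F$-covers a band $B$ and $B'\subset B$ is a sub-band with nonempty interior, then there exists a sub-band $A'\subset A$ with nonempty interior such that $F(A')\subset B'$. I would prove this fiberwise on the common residual subset of $\SI$ on which the six boundary functions $M_{F(A)}$, $m_{F(A)}$, $M_{B}$, $m_{B}$, $M_{B'}$, $m_{B'}$ are simultaneously continuous. On such a $\theta$, the $F$-covering hypothesis, say $F(A^{-})\le B^{-}$ and $F(A^{+})\ge B^{+}$, forces the continuous map $F(\theta,\cdot)$ to stretch the interval $A^{\theta}$ completely across $B^{\theta}$, and an intermediate-value argument picks out a sub-interval of $A^{\theta}$ mapped exactly onto $(B')^{\theta}$. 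The fiberwise endpoints of this sub-interval define semicontinuous functions on that residual, and the band-between-two-pseudo-curves construction of Lemma~\ref{GoodBands} assembles them into the desired band $A'$ with nonempty interior.

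With the pullback lemma in hand, I would construct the itinerary bands $A_{\mathbf{i}}$ by induction on $n$: set $A_{(i_{0})}:=L_{i_{0}}$, and given $A_{i_{1},\dots,i_{n-1}}\subset L_{i_{1}}$, apply the lemma with $A=L_{i_{0}}$, $B=J$, and $B'=A_{i_{1},\dots,i_{n-1}}$ to obtain $A_{\mathbf{i}}\subset L_{i_{0}}$ of nonempty interior with $F(A_{\mathbf{i}})\subset A_{i_{1},\dots,i_{n-1}}$. By construction $F^{k}(A_{\mathbf{i}})\subset L_{i_{k}}$ for every $0\le k\le n-1$. This step uses the standard tacit convention that each $L_{j}\in\mathcal{D}$ lies in $J$, implicit in the horseshoe definition and needed so that $B'\subset B$ in the inductive call.

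For the entropy estimate, at the outset I would shrink each $L_{i}$ to a compact sub-band $\widetilde L_{i}\subset\Int(L_{i})$ with nonempty interior chosen so that the $\widetilde L_{i}$ are pairwise at distance $\ge\varepsilon>0$ and each still $F$-covers $J$; this is possible because the $L_{i}$ have pairwise disjoint interiors and because $F$-covering is an open condition in the Hausdorff metric on bands with nonempty interior. Running the construction with the $\widetilde L_{i}$ and picking one $x_{\mathbf{i}}\in\Int(A_{\mathbf{i}})$ per itinerary yields $s^{n}$ points with $F^{k}(x_{\mathbf{i}})\in\widetilde L_{i_{k}}$, hence $(n,\varepsilon)$-separated, so $h(F)\ge\tfrac{1}{n}\log s^{n}=\log s$. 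The main obstacle is executing the pullback lemma cleanly: the fiberwise intermediate-value argument and the gluing of pointwise selections into a genuine band---rather than just a set defined on a residual set of fibers---require careful handling of the pseudo-curve formalism, in particular Remark~\ref{pseudocurves-properties}, Remark~\ref{ordentapas}, and Lemma~\ref{GoodBands}; the shrinking step that enforces positive separation is routine once the combinatorial structure is in place.
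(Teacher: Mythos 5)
Your overall architecture (pullback-covering lemma, itinerary bands, then a counting/separation argument) parallels the paper's route via Lemma~\ref{hor1}, and indeed your pullback lemma is exactly Lemma~\ref{prop-band}(a.2,3), already quoted from \cite{FJJK}, so re-proving it fiberwise is unnecessary (and the fiberwise IVT plus gluing of endpoint selections into a band is precisely the delicate part that reference handles). The genuine gap is the shrinking step. It is not true that ``$F$-covering is an open condition in the Hausdorff metric'': the covering relation $F(A^{-})\le J^{-}$, $F(A^{+})\ge J^{+}$ consists of inequalities between covers that are typically equalities on a residual set, and then any inward perturbation of $A$ destroys the covering of the \emph{fixed} target $J$. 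The most basic horseshoe already kills the claim: take the uncoupled map $F(\theta,x)=(\theta+\omega,f(x))$ with $f$ the full tent map, $J=\Omega$, $L_{1}=\SI\times[0,\tfrac12]$, $L_{2}=\SI\times[\tfrac12,1]$. Any compact band $\widetilde L_{1}\subset\Int(L_{1})$ has $\max_{\theta}M_{\widetilde L_{1}}(\theta)<\tfrac12$, so $F(\widetilde L_{1}^{+})\ge J^{+}$ fails and $\widetilde L_{1}$ no longer $F$-covers $J$; moreover $L_{1}$ and $L_{2}$ touch along $\SI\times\{\tfrac12\}$, so they cannot be pushed a distance $\varepsilon$ apart while both still covering $J$. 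Since your $(n,\varepsilon)$-separated points are manufactured from the $\widetilde L_{i}$, the entropy estimate as written does not go through.

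The repair is the one the paper uses: do not insist that the shrunk pieces cover all of $J$. After passing to the $s^{n}$-horseshoe for $F^{n}$ (your itinerary bands, or Lemma~\ref{hor1}), discard the smallest and largest elements of $\mathcal{D}_{n}$ and let $K$ be the band spanned by the remaining ones; $K$ lies in the interior of $J$, so by Lemma~\ref{prop-band}(a.2,3) each remaining $D$ contains \emph{in its interior} a band $A(D)$ with $F^{n}(A(D))=K$. These $A(D)$ are compact and pairwise disjoint (the $D$'s have pairwise disjoint interiors), hence automatically at positive mutual distance, and since $K$ contains every $A(D')$, arbitrary finite itineraries through the $A(D)$'s are realized; this yields the count $(s^{n}-2)^{k}$ (the paper phrases it with an adapted open cover rather than separated sets, but either bookkeeping works). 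One then gets $h(F)\ge\tfrac1n\log(s^{n}-2)$ and lets $n\to\infty$, so losing the two extreme bands costs nothing in the limit. In short: your claim that the $L_{i}$ can be shrunk while still covering $J$ is false and must be replaced by shrinking the \emph{target} to $K$; the rest of your plan then essentially coincides with the paper's proof.
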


Next we want to introduce a class of maps that play the role of the
connect-the-dots maps in the interval case and use them to study the
topological entropy in relation with the periodic orbits of strips.

\begin{definition}[Quasiperiodic $\tau$-linear map]
Given a strips pattern $\tau$ we define a \emph{quasiperiodic
$\tau$-linear map} $F_{\tau} \in \cSO$ as:
\[
F_{\tau} (\theta, x) := (R_{\omega}(\theta), f_{\tau}(x))
\]
where $R_\omega$ is the irrational rotation by angle $\omega$ and
$f_{\tau}$ is a $\tau$-linear interval map
(Definition~\ref{connect-the-dots} --- recall that $\tau$ is also an
interval pattern).
\end{definition}

\begin{remark}\label{per-ban}
Since, by definition, $f_{\tau}$ has a periodic orbit with interval
pattern $\tau,$ $F_{\tau}$ has a periodic orbit of bands
(in fact curves which are horizontal circles) with strips pattern
$\tau.$
\end{remark}

The next main result shows that the quasiperiodic
$\tau$-linear maps have minimal entropy among all maps from $\cSO$
which exhibit the strips pattern $\tau$, again as in the interval case.

\begin{MainTheorem}\label{teo-prin-C}
Assume that $F \in \cSO$ exhibits the strips pattern~$\tau$. Then
\[
h(F) \ge h (F_{\tau}) = h (f_{\tau}).
\]
\end{MainTheorem}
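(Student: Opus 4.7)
The equality $h(F_{\tau}) = h(f_{\tau})$ is immediate: the map $F_{\tau}(\theta,x) = (R_{\omega}(\theta), f_{\tau}(x))$ is the direct product of the irrational rotation $R_{\omega}$ (which has zero topological entropy) and $f_{\tau}$, and topological entropy is additive on direct products. The substance of the theorem is the inequality $h(F) \ge h(f_{\tau})$, which I would establish by transferring the Markov combinatorics of $f_{\tau}$ to $F$ and then invoking Theorem~\ref{teo-prin-B}.

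Let $\mathcal{B} = \{B_{1} < B_{2} < \cdots < B_{n}\}$ be the periodic orbit of strips of $F$ with pattern $\tau$, which by Remarks~\ref{band-properties} and~\ref{solidorpseudocurve} may be assumed to consist of bands (solid or pseudo-curves). Introduce the $n-1$ \emph{basic bands} $E_{i} := E_{B_{i}^{+}\, B_{i+1}^{-}}$ via Lemma~\ref{GoodBands}; these mirror the basic intervals $J_{i} := [p_{i},p_{i+1}]$ of the canonical periodic orbit $P = \{p_{1} < \cdots < p_{n}\}$ of $f_{\tau}$. Let $A = (A_{ij})$ be the Markov transition matrix of $f_{\tau}$ on the $J_{i}$'s, i.e.\ $A_{ij}=1$ iff $f_{\tau}(J_{i}) \supset J_{j}$; by standard interval-map theory $h(f_{\tau}) = \log \rho(A)$, where $\rho(A)$ is the spectral radius of $A$, and $(A^{k})_{i_{0}i_{0}}$ grows like $\rho(A)^{k}$ for some vertex $i_{0}$.

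The core step is a \emph{Covering Lemma}: if $A_{ij}=1$, then $E_{i}$ $F$-covers $E_{j}$ in the sense introduced just before Theorem~\ref{teo-prin-B}. Granting this, induction on $k$ shows that for every path $i_{0},i_{1},\ldots,i_{k}$ in the transition graph there is a sub-band $C_{(i_{0},\ldots,i_{k})} \subset E_{i_{0}}$ with $F^{\ell}(C) \subset E_{i_{\ell}}$ for $0 \le \ell \le k$ and such that $F^{k}$ sends $C$ covering $E_{i_{k}}$. Distinct paths from $i_{0}$ to $i_{0}$ produce pairwise weakly ordered sub-bands of $E_{i_{0}}$, because any two of them first diverge at some step $\ell$ into weakly ordered basic bands, and weak ordering pulls back through $F^{\ell}$. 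Given $\varepsilon>0$, choose $k$ and $i_{0}$ so that the number $s$ of closed loops of length $k$ at $i_{0}$ satisfies $\tfrac{1}{k}\log s \ge h(f_{\tau}) - \varepsilon$; the associated sub-bands, together with the target $E_{i_{0}}$, form an $s$-horseshoe for $F^{k}$, so Theorem~\ref{teo-prin-B} yields $h(F) = \tfrac{1}{k} h(F^{k}) \ge \tfrac{1}{k}\log s \ge h(f_{\tau}) - \varepsilon$. Letting $\varepsilon \to 0$ gives the desired inequality.

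The main obstacle is the Covering Lemma itself. Its intuitive content is that since $F(B_{k}) = B_{\tau(k)}$ and $F$ acts continuously on each fiber, the fiberwise image of $B_{k}$ is an interval whose extrema are the $F$-images of the extrema of $B_{k}^{\theta}$; hence $\{F(B_{k}^{-}),F(B_{k}^{+})\} = \{B_{\tau(k)}^{-},B_{\tau(k)}^{+}\}$ as pseudo-curves, in an order that depends on the local orientation of the fiber map. It follows that $F(E_{i})$ spans fiberwise between $B_{\tau(i)}$ and $B_{\tau(i+1)}$, hence $F$-covers every $E_{j}$ with $\min(\tau(i),\tau(i+1)) \le j < \max(\tau(i),\tau(i+1))$, which is precisely $\{j : A_{ij}=1\}$. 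Turning this geometric picture into a clean statement about $F$-covering of bands requires careful use of the residual-set machinery of Section~\ref{secDefRes} — top and bottom covers, the band $E_{AB}$ of Lemma~\ref{GoodBands}, pseudo-curves — and careful bookkeeping around the possible fiberwise orientation reversals; the same residual-set apparatus also underlies the pairwise weak ordering of the horseshoe sub-bands in the inductive step above.
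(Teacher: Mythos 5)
Your overall strategy coincides with the paper's: the equality $h(F_\tau)=h(f_\tau)$ is the paper's Lemma~\ref{Ft=ft} (proved there via Bowen's fibred-entropy formula; your product-formula argument is equally valid since $F_\tau$ is a genuine direct product), and the inequality is obtained exactly as in the paper by transferring the Markov combinatorics of $f_\tau$ to the basic bands $E_i=E_{_{B_i^+B_{i+1}^-}}$, converting loops (diagonal entries of powers of the transition matrix) into horseshoes for $F^k$, invoking Theorem~\ref{teo-prin-B}, and letting $\varepsilon\to 0$. The paper packages precisely this as Remark~\ref{graphunique-strips} together with Remark~\ref{4horseshoe} and Lemma~\ref{ent-mat} (which uses $\limsup_k\tfrac1k\log\tr(T^k)=\log\rho(T)$). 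A minor slip: $h(f_\tau)=\max\{0,\log\rho(A)\}$ rather than $\log\rho(A)$, which is harmless since the statement is trivial when $\rho(A)\le 1$.

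The genuine problem is the justification you sketch for your Covering Lemma. You argue that, fibrewise, the extrema of the image of $B_k^{\theta}$ are the images of its extrema, so that $\{F(B_k^-),F(B_k^+)\}=\{B_{\tau(k)}^-,B_{\tau(k)}^+\}$ up to orientation; this is false in general, because the fibre maps $f_\theta$ are only continuous, not monotone, on $B_k^{\theta}$: the top and bottom points of a fibre may be sent to interior points of $B_{\tau(k)}^{R_\omega(\theta)}$, so $F(B_k^{\pm})$ need not coincide with either cover of $B_{\tau(k)}$, and an argument based on tracking orientation reversals of the fibre maps would not go through. The covering relation you need is nevertheless true, by a simpler mechanism that needs no orientation bookkeeping: $F(B_i^+)\subset F(B_i)=B_{\tau(i)}$, and any strip $C\subset B$ satisfies $B^-\le C\le B^+$ (since $m_{_B}\le m_{_C}\le M_{_C}\le M_{_B}$ pointwise and $B^{\pm}$ are pinched on the residual sets of continuity of $M_{_B}$ and $m_{_B}$); combined with the spatial ordering $B_1<\dots<B_n$ this gives $F(E_i^-)=F(B_i^+)\le B_j^+=E_j^-$ and $F(E_i^+)=F(B_{i+1}^-)\ge B_{j+1}^-=E_j^+$ whenever $\tau(i)\le j<\tau(i+1)$, and the mirrored inequalities when $\tau(i+1)\le j<\tau(i)$; by Definition~\ref{signedcovering} this is exactly the signed Markov graph of $f_\tau$, i.e.\ the content of Remark~\ref{graphunique-strips}. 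A second step you assert without support is that ``weak ordering pulls back through $F^{\ell}$'': preimages do not preserve order in general, and one needs the specific sub-bands furnished by Lemma~\ref{prop-band}(a.3) together with the order statement (a.4), imported from \cite{FJJK}, as used in Lemma~\ref{lazoasociadobandas}, Remark~\ref{4horseshoe} and Lemma~\ref{hor1}. With these two repairs your proposal becomes the paper's proof.
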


Theorem~\ref{teo-prin-C} has an interesting consequence concerning
the entropy of strips patterns that we define as follows.

\begin{definition}[Entropy of strips patterns]
Given a strips pattern $\tau$ we define the \emph{entropy of $\tau$} as
\[
h(\tau) := \inf\set{h(F)}{\text{$F \in \cSO$ and $F$ exhibits
    the strips pattern $\tau$}}.
\]
\end{definition}

With this definition, in view of the Remark~\ref{per-ban},
Theorem~\ref{teo-prin-C} can be written as follows:

\setcounter{MainTheorem}{2}
\begin{MainTheorem}
Assume that $F \in \cSO$ exhibits the strips pattern~$\tau$. Then
\[
h(\tau) = h (F_{\tau}) = h (f_{\tau}).
\]
\end{MainTheorem}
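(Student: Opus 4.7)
The plan is to deduce the restatement directly from the first version of Theorem~\ref{teo-prin-C} together with Remark~\ref{per-ban} and the definition of $h(\tau)$. No new dynamics is required: the work has already been done in the first version, and the only task is to match it with the infimum defining the pattern entropy. I would organize the argument in two short steps.

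First, I would check that the set over which the infimum in $h(\tau)$ is taken is non-empty. This is exactly the content of Remark~\ref{per-ban}: the quasiperiodic $\tau$-linear map $F_{\tau}$ has a periodic orbit of bands (in fact horizontal circles) with strips pattern $\tau$, so $F_{\tau}$ itself exhibits $\tau$. Therefore $F_{\tau}$ is admissible in the infimum, and this yields the upper bound
\[
h(\tau) \le h(F_{\tau}).
\]

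Second, I would apply the first version of Theorem~\ref{teo-prin-C}: every $F \in \cSO$ that exhibits the strips pattern $\tau$ satisfies
\[
h(F) \ge h(F_{\tau}) = h(f_{\tau}).
\]
Taking the infimum over all such $F$ gives $h(\tau) \ge h(F_{\tau})$, which combined with the previous step produces $h(\tau) = h(F_{\tau})$. The equality $h(F_{\tau}) = h(f_{\tau})$ is already contained in the first version; it also admits a direct proof from the product structure $F_{\tau} = R_{\omega} \times f_{\tau}$, since the irrational rotation $R_{\omega}$ is an isometry of a compact metric space, hence has zero topological entropy, and the product formula gives $h(F_{\tau}) = h(R_{\omega}) + h(f_{\tau}) = h(f_{\tau})$.

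There is no real obstacle in this argument: all the substantive content (the lower bound for $h(F)$ and the identification of $h(F_{\tau})$ with $h(f_{\tau})$) is already established in the first version of Theorem~\ref{teo-prin-C}. The only genuinely new input is the use of Remark~\ref{per-ban} to ensure that the infimum is actually attained by $F_{\tau}$, which converts the earlier inequality into the desired equality.
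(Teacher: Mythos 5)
Your proposal is correct and follows essentially the same route as the paper, which also presents this statement as an immediate reformulation of the first version of Theorem~\ref{teo-prin-C}: the lower bound $h(F)\ge h(F_\tau)=h(f_\tau)$ for every $F$ exhibiting $\tau$ gives $h(\tau)\ge h(F_\tau)$, while Remark~\ref{per-ban} shows $F_\tau$ itself exhibits $\tau$, so the infimum is attained and equality holds. Your alternative justification of $h(F_\tau)=h(f_\tau)$ via the product structure and the vanishing entropy of the rotation matches the paper's Lemma~\ref{Ft=ft}.
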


By \cite[Corollary~4.4.7]{ALM} and \cite[Lemma~4.4.11]{ALM} we
immediately get the following simple but important corollary of
Theorem~\ref{teo-prin-C} which will allow us to obtain lower bounds of
the topological entropy depending on the set of periods.

\begin{corollary}
Assume that $\tau$ and $\nu$ are strips patterns such that
$\tau \Longrightarrow_{\Omega} \nu.$
Then $h(\tau) \ge h(\nu).$
\end{corollary}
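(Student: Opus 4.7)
The plan is to chain together Theorem~\ref{teo-prin-A} (identification of the forcing relations), Theorem~\ref{teo-prin-C} in its reformulated version (identification of the strip-pattern entropy with the $\tau$-linear interval entropy), and the analogous corollary in the interval case cited from \cite{ALM}. Schematically, the argument is the chain of implications
\[
\tau \Longrightarrow_{\Omega} \nu
\;\Longrightarrow\; \tau \Longrightarrow_{\I} \nu
\;\Longrightarrow\; h(f_{\tau}) \ge h(f_{\nu})
\;\Longleftrightarrow\; h(\tau) \ge h(\nu).
\]

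First I would apply Theorem~\ref{teo-prin-A}: since $\tau$ and $\nu$ are cyclic permutations and therefore simultaneously interval patterns and strips patterns, the hypothesis $\tau \Longrightarrow_{\Omega} \nu$ yields $\tau \Longrightarrow_{\I} \nu$. This reduces matters to a purely interval-theoretic statement about $\tau$-linear maps.

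Next I would invoke the cited results from \cite{ALM}: \cite[Lemma~4.4.11]{ALM} ensures that $h(f_{\tau})$ is a well-defined invariant of the pattern (any two $\tau$-linear maps are topologically conjugate on $[\min P,\max P]$ and hence have the same topological entropy), while \cite[Corollary~4.4.7]{ALM} supplies the monotonicity under interval forcing, namely $\tau \Longrightarrow_{\I} \nu$ implies $h(f_{\tau}) \ge h(f_{\nu})$.

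Finally, the reformulated Theorem~\ref{teo-prin-C} gives $h(\tau) = h(f_{\tau})$ and $h(\nu) = h(f_{\nu})$, so the interval inequality lifts immediately to $h(\tau) \ge h(\nu)$. There is no genuine obstacle here; the whole purpose of having proved Theorems~\ref{teo-prin-A} and \ref{teo-prin-C} is to make this corollary essentially automatic, a pure bookkeeping consequence of the three ingredients. (Alternatively, and still more directly, one can bypass \cite{ALM} entirely: for any $F \in \cSO$ exhibiting $\tau$, the hypothesis $\tau \Longrightarrow_{\Omega} \nu$ forces $F$ to exhibit $\nu$, whence $h(F) \ge h(f_{\nu}) = h(\nu)$ by Theorem~\ref{teo-prin-C}; taking the infimum over such $F$ yields $h(\tau) \ge h(\nu)$.)
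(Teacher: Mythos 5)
Your argument is correct and is essentially the paper's own: the paper states this corollary with no separate proof, citing exactly \cite[Corollary~4.4.7]{ALM} and \cite[Lemma~4.4.11]{ALM} together with Theorem~\ref{teo-prin-C}, which is precisely your chain (you merely make explicit the use of Theorem~\ref{teo-prin-A} to pass from $\Longrightarrow_{\Omega}$ to $\Longrightarrow_{\I}$, a step the paper leaves implicit). Your parenthetical alternative --- using only the definition of $h(\cdot)$ as an infimum together with Theorem~\ref{teo-prin-C} --- is also valid and even shorter, but it is an aside rather than a different overall strategy.
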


\begin{corollary}
Assume that $F \in \cSO$ has a periodic orbit of strips of
period~$2^{n}q$ with $n \ge 0$ and $q \ge 1$ odd. Then,
\[
h(F) \ge \frac{\log(\lambda_q)}{2^n}
\]
where $\lambda_1 = 1$ and, for each $q \ge 3$ odd, $\lambda_q$ is the
largest root of the polynomial $x^q - 2 x^{q-2} - 1$.
Moreover,
for every $m=2^n q$ with $n \ge 0$ and $q \ge 1$ odd,
there exists a map $F_m \in \cSO$ with a periodic orbit of bands of period $m$
such that $h(F_m) = \tfrac{\log(\lambda_q)}{2^n}.$
\end{corollary}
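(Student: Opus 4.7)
My plan is to combine Theorem~\ref{teo-prin-C} with the classical lower bound for the topological entropy of an interval map with a periodic orbit of period $2^n q$ from \cite{BGMY}, and then to realize the resulting bound by lifting an extremal interval pattern to the quasiperiodic setting.

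For the lower bound, I would denote by $\tau$ the strips pattern of the given $2^n q$-periodic orbit of strips of $F$. Since interval and strips patterns coincide as algebraic objects, $\tau$ is simultaneously an interval pattern of period $2^n q$. Theorem~\ref{teo-prin-C} gives $h(F) \ge h(f_\tau)$, while \cite[Corollary~4.4.7]{ALM} and \cite[Lemma~4.4.11]{ALM} (which encode the BGMY estimate of \cite{BGMY}) give
\[
h(f_\tau) \ge \frac{\log(\lambda_q)}{2^n}.
\]
Concatenating these two inequalities yields the first claim.

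For the \emph{moreover} part, I would fix $m = 2^n q$ and select an interval pattern $\tau_m$ of period $m$ realizing equality in the BGMY bound, so that $h(f_{\tau_m}) = \tfrac{\log(\lambda_q)}{2^n}$; the existence of such $\tau_m$ is classical \cite{BGMY}. Then I would take $F_m := F_{\tau_m} \in \cSO$, the quasiperiodic $\tau_m$-linear map. By Remark~\ref{per-ban}, the horizontal circles $\SI \times \{p_i\}$ indexed by the periodic points $p_1 < \dots < p_m$ of $f_{\tau_m}$ form a periodic orbit of bands of period $m$ with strips pattern $\tau_m$; in particular $F_m$ has a periodic orbit of bands of period~$m$. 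Applying Theorem~\ref{teo-prin-C} to $F_m$ itself yields
\[
h(F_m) = h(F_{\tau_m}) = h(f_{\tau_m}) = \frac{\log(\lambda_q)}{2^n},
\]
as desired.

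No step poses a substantive obstacle: the whole argument is a routine assembly of Theorem~\ref{teo-prin-C} with the classical BGMY estimate on the interval. The only mildly subtle point lies in the \emph{moreover} part, where one must recognize that Theorem~\ref{teo-prin-C}, applied to the distinguished map $F_{\tau_m}$, itself already supplies the sharp equality $h(F_{\tau_m}) = h(f_{\tau_m})$; once this is noticed, the construction of the extremal example is immediate.
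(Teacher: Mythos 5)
Your proposal is correct and follows essentially the same route as the paper: the lower bound is Theorem~\ref{teo-prin-C} concatenated with the BGMY estimate for interval patterns of period $2^n q$, and the extremal example is obtained by taking $F_m = F_{\nu_m}$ for an interval pattern $\nu_m$ of period $m$ with $h(f_{\nu_m}) = \tfrac{\log \lambda_q}{2^n}$ (the paper invokes the primary patterns of \cite[Theorem~4.4.17]{ALM} for this existence), using the equality $h(F_{\nu_m}) = h(f_{\nu_m})$ contained in Theorem~\ref{teo-prin-C} (i.e.\ Lemma~\ref{Ft=ft}) together with Remark~\ref{per-ban} for the periodic orbit of bands. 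No gaps to report.
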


\begin{proof}
Let $\tau$ denote the strips pattern of a periodic orbit of strips of
$F$ of period~$2^{n}q.$ By Theorem~\ref{teo-prin-C} and \cite{BGMY}
(see also Corollaries~4.4.7 and 4.4.18 of \cite{ALM}) we get that
\[
h(F) \ge h(f_{\tau}) \ge \frac{\log \lambda_q}{2^{n}}.
\]
To prove the second statement we use  \cite[Theorem~4.4.17]{ALM}:
for every $m=2^n q$ there exists a primary pattern $\nu_m$ of period $m$ such that
$h(f_{\nu_m}) = \frac{\log \lambda_q}{2^{n}}.$
Then, from Theorem~\ref{teo-prin-C}, we can take $F_m = F_{\nu_m}$.
\end{proof}

\section{Proof of Theorem~\ref{teo-prin-A}}\label{ProofOfteo-prin-A}

To prove Theorem~\ref{teo-prin-A} we need some more notation and
preliminary results.

An important tool in the study of patterns is the Markov graph.
Signed Markov graphs are a specialization of Markov graphs.
Next we define them and clarify the relation with our situation.

A a \emph{combinatorial (directed) signed graph} is defined as a
pair $G = (V, \mathcal{A})$ where
$V$ is a finite set, called the \emph{set of vertices}, and
$\mathcal{A} \subset V \times V \times \{+, -\}$ is called the \emph{set of signed arrows}.
Given a \emph{signed arrow} $\alpha = (I, J, s) \in \mathcal{A},$
$I$ is the \emph{beginning of $\alpha$}, $J$ is the \emph{end of
$\alpha$} and $s$ is the \emph{sign of $\alpha$}.
Such an arrow $\alpha$ is denoted by $I \signedarrow{s} J.$

\subsection{Signed Markov graphs in the interval}

We start by introducing the notion of \emph{signed covering}.
In what follows, $\Bd(A)$ will denote the boundary of $A.$

\begin{definition}\label{sig-arrow}
Let $f \in \cI$ and let $I,J \subset \I$ be two intervals.
We say that $I$ \emph{positively $F$-covers} $J$, denoted by
$I \signedarrow{+} J$
(or $I \signedarrow[f]{+} J$ if we need to specify the map),
if $f(\min I) \le \min J < \max J \le f(\max I)$ and,
analogously, we say that $I$ \emph{negatively $F$-covers} $J$,
denoted by $I \signedarrow{-} J$ (or $I \signedarrow[f]{-} J$),
if $f(\max I) \le \min J < \max J \le f(\min I).$
Observe that if $I \signedarrow{s_{1}} J_1$ and
$I \signedarrow{s_{2}} J_2$ then $s_{1} = s_{2}.$

We will write $I \signedarrowequal{s_{1}} J$ or $I \signedarrowequal[f]{s_{1}} J$
to denote that $f(I) = J$ and $I \signedarrow[f]{s_{1}} J$
(in particular, $f(\Bd(I)) = \Bd(J)$).
\end{definition}

We associate a signed graph to a periodic orbit of an interval map as
follows.
\begin{definition}\label{Markovsignedinterval}
Let $f \in \cI$ and let $P$ be a periodic orbit $f$.
A \emph{$P$-basic interval} is the closure of a connected
component of $[\min P,\max P] \setminus P.$
The \emph{$P$-signed Markov graph of $f$} is the combinatorial signed
graph that has the set of all basic intervals as set of vertices
$V$ and the signed arrows in $\mathcal{A}$ are the ones given by
Definition~\ref{sig-arrow}.
\end{definition}

\begin{remark}\label{graphunique-int}
Observe that the $P$-signed Markov graph of $f$ depends only on
$f\evalat{P}$ or more precisely on the pattern of $P$. It does not
depend on the concrete choice of the points of $P$ and on the graph
of $f$ outside $P$. Consequently, if
$P$ is a periodic orbit of $f \in \cI$ and
$Q$ is a periodic orbit of $g \in \cI$
with the same pattern then the $P$-signed Markov graph of
$f$ and the $Q$-signed Markov graph of $g$ coincide.
In particular, the $P$-signed Markov graph of
$f$ and the $P$-signed Markov graph of $f_P$ coincide.
\end{remark}

\subsection{Signed Markov graphs in $\Omega$}

Now we also associate a signed graph to a periodic orbit of strips.
We start by defining the notion of \emph{signed covering} for bands.
It is an improvement of the notion of $F$-covering introduced before.

\begin{definition}[Signed covering \protect{\cite[Definition~4.14]{FJJK}}]\label{signedcovering}
Let $F \in \cSO$ and let $A$ and $B$ be bands in $\Omega.$
We say that $A$ \emph{positively $F$-covers} $B$, denoted by
$A \signedarrow{+} B$
(or $A \signedarrow[F]{+} B$ if we need to specify the map),
if\footnote{%
Although these definitions are formally different from \cite[Definition~4.14]{FJJK},
they are equivalent by \cite[Lemma~4.3(c,d)]{FJJK} and the definitions of the
weak ordering of strips.}\label{footremark}
$F(A^{-}) \le B^{-}$ and $F(A^{+}) \ge B^{+}$
and, analogously, we say that $A$ \emph{negatively $F$-covers} $B$, denoted by
$A \signedarrow{-} B$ (or $A \signedarrow[F]{-} B$),
if $F(A^{-}) \ge B^{+}$ and $F(A^{+}) \le B^{-}.$

Observe that, as in the interval case (see Definition~\ref{sig-arrow}), if
$A \signedarrow{s_1} B_1$ and $A \signedarrow{s_2} B_2,$ then
$s_1 = s_2.$

We will write $A \signedarrowequal{s_{1}} B$ or $A \signedarrowequal[F]{s_{1}} B$
to denote that $F(A) = B$ and $A \signedarrow[F]{s_{1}} B.$
\end{definition}

Next, by using the notion of band between two pseudo-curves,
we will define the analogous of basic interval (\emph{basic band}) and
signed Markov graph for maps from $\cSO$.

\begin{definition}\label{Markovsignedstrips}
Let $F \in \cSO$ and let $\mathcal{B} = \{B_{1},B_{2},\ldots,B_{n}\}$ be
a periodic orbit of strips of $F$ with the spatial labelling
(that is, $B_{1} < B_{2} < \dots < B_{n}$).
For every $i = 1,2,\dots, n-1$ the band
(see Remark~\ref{ordentapas} and Lemma~\ref{GoodBands})
\[
I_{_{B_{i}B_{i+1}}} := E_{_{B_{i}^{+}B_{i+1}^{-}}}
   = \overline{\Int\left(E_{_{B_{i}^{+}B_{i+1}^{-}}}\right)}
\]
will be called a \emph{basic band}.
Observe that, from Lemma~\ref{GoodBands}(a),
$I_{_{B_{i}B_{i+1}}}^{-} = B_{i}^{+}$ and $I_{_{B_{i}B_{i+1}}}^{+} = B_{i+1}^{-}.$

The \emph{$\mathcal{B}$-signed Markov graph of $F$} is the combinatorial
signed graph that has the set of all basic bands as set of vertices
$V$ and the signed arrows in $\mathcal{A}$ are the ones given by
Definition~\ref{signedcovering}.
\end{definition}

Clearly, all the basic bands are contained in $E_{_{B_{1}B_{n}}},$
$I_{_{B_{i}B_{i+1}}} \le I_{_{B_{i+1}B_{i+2}}}$ for $i=1,2,\dots, n-2$
and if
$I_{_{B_{i}B_{i+1}}} \cap I_{_{B_{j}B_{j+1}}} \ne \emptyset$ then
$|i-j| = 1.$

\begin{remark}\label{graphunique-strips}
As in the interval case (see Remark~\ref{graphunique-int})
the $P$-signed Markov graph of $F$ is a pattern invariant.
Moreover, if
$P$ is a periodic orbit of $F \in \cSO$ and
$Q$ is a periodic orbit of the interval map $f \in \cI$
with the same pattern, then the $P$-signed Markov graph of
$F$ and the $Q$-signed Markov graph of $f$ coincide.
In particular, the $P$-signed Markov graph of
$F$ and the $P$-signed Markov graph of $f_P$ coincide.
\end{remark}

The following lemma summarizes the properties of basic bands and
arrows. We will use it in the proof of Theorem~\ref{teo-prin-A}.

\begin{lemma}\label{prop-band}
The following statements hold.
\begin{enumerate}[(a)]
\item Let $F \in \cSO$ and let $A$ and $B$ be bands such that
there is a signed arrow $A \signedarrow{s} B$ from $A$ to $B$ in the
signed Markov graph of $F$. Then,
\begin{enumerate}[({a}.1)]
\item $F(A) \supset B$.
\item $A \signedarrow{s} D$ for every band $D \subset B.$
\item There exists a band $C \subset A$ such that $C \signedarrowequal{s} B.$
      Moreover, $F(C^+) \subset B^+$ and $F(C^-) \subset B^-$ if $s = +,$
      and       $F(C^-) \subset B^+$ and $F(C^+) \subset B^-$ if $s = -.$
\item Assume that $A \signedarrow{s} \widetilde{B}$ with
$B \le \widetilde{B}$ and let $C$ and $\widetilde{C}$ denote the
bands given by (a.3) for $B$ and $\widetilde{B}$ respectively.
Then,
$C \le \widetilde{C}$ if $s = +,$ and
$C \ge \widetilde{C}$ if $s = -.$
\end{enumerate}
\item Let $F \in \cSO$ and let $A$ be a band such that
$A \signedarrow{\pm} A.$
Then there exists a band $A_{\infty} \subset A$ such that
$A_{\infty} \signedarrowequal{\pm} A_{\infty}.$
\end{enumerate}
\end{lemma}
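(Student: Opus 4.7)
The plan is to prove (a.1) and (a.2) directly from the definition of signed covering, set up the main fiber-wise pullback construction in (a.3), deduce (a.4) by monotonicity, and obtain (b) as a Cantor-type limit of iterations of (a.3). For (a.1), I work on the residual set $G := \rescont[A^-] \cap \rescont[A^+] \cap \rescont[m_B] \cap \rescont[M_B]$: for $\theta$ with $R_\omega(\theta) \in G$, the fiber $A^\theta$ is an interval whose image $f_\theta(A^\theta) = F(A)^{R_\omega(\theta)}$ is an interval spanning $[m_B(R_\omega(\theta)), M_B(R_\omega(\theta))]$, by the hypotheses $F(A^-) \le B^-$ and $F(A^+) \ge B^+$. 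Taking closures and using that $B$ equals its core (Lemma~\ref{CoreForResidualOfContinuity}) yields $B \subset F(A)$. Statement (a.2) is immediate: for $D \subset B$ one has $D^- \ge B^-$ and $D^+ \le B^+$, so the signed-covering inequalities for $A \signedarrow{s} B$ propagate to $A \signedarrow{s} D$.

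The substantial work is (a.3), treated for $s = +$ (the $s = -$ case is symmetric). Refine $G$ to include also the residuals of continuity of $F(A^\pm)$ and their $R_\omega$-pullbacks, and for $\theta$ with $R_\omega(\theta) \in G$ define fiber-wise selectors modelled on the classical interval pullback:
\begin{align*}
b(\theta) &:= \min\{x \in A^\theta : f_\theta(x) = M_B(R_\omega(\theta))\}, \\
a(\theta) &:= \max\{x \in [m_A(\theta), b(\theta)] : f_\theta(x) = m_B(R_\omega(\theta))\}.
\end{align*}
Continuity of $F$ and the intermediate value theorem on $A^\theta$ yield $f_\theta([a(\theta), b(\theta)]) = [m_B(R_\omega(\theta)), M_B(R_\omega(\theta))]$ with the endpoints matched. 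A semicontinuity analysis of $a$ and $b$ (lower for $a$, upper for $b$) produces a further residual on which both are continuous; on this residual define
\[
C := \overline{\bigcup_{\theta}\{\theta\} \times [a(\theta), b(\theta)]}.
\]
By Remark~\ref{pseudoband} and Lemma~\ref{CoreForResidualOfContinuity}, $C \subset A$ is a band with $C^- = \overline{\Graph(a)}$ and $C^+ = \overline{\Graph(b)}$; the identities $F(C^-) = B^-$ and $F(C^+) = B^+$ follow directly from the construction, giving $C \signedarrowequal{+} B$ together with the sharper ``moreover'' inclusions. Part (a.4) is then a short monotonicity check: for $B \le \widetilde{B}$ both $+$-covered by $A$, continuity of $f_\theta$ on $A^\theta$ together with $M_B \le m_{\widetilde{B}}$ forces $b(\theta) \le \tilde{a}(\theta)$ on a common residual, whence $C^+ \le \widetilde{C}^-$ and so $C \le \widetilde{C}$; the $s = -$ case reverses the inequality.

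For (b), iterate (a.3) starting from $A_0 := A$: given $A_n \signedarrow{\pm} A_n$, apply (a.3) with covered band $A_n$ to obtain $A_{n+1} \subset A_n$ with $A_{n+1} \signedarrowequal{\pm} A_n$; since $A_{n+1} \subset A_n$, (a.2) also gives $A_{n+1} \signedarrow{\pm} A_{n+1}$, continuing the recursion. Put $A_\infty := (\bigcap_n A_n)^c$. The intersection $\bigcap_n \rescont[A_n]$ is residual in $\SI$, and on it the fibers $A_n^\theta$ are nested nonempty compact intervals with nonempty compact interval intersection, so $A_\infty$ is a nonempty band. The inclusion $F(A_\infty) \subset A_\infty$ uses $F(A_{n+1}) = A_n$, while the reverse follows by a diagonal compactness argument: every $y \in \bigcap_n A_n$ has an $F$-preimage $x_n \in A_{n+1}$, and a convergent subsequence of $\{x_n\}$ yields a preimage in $A_\infty$. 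The Hausdorff limits $A_\infty^- = \lim_n A_n^-$ and $A_\infty^+ = \lim_n A_n^+$ then satisfy $F(A_\infty^\pm) = A_\infty^\pm$ if $s = +$, and $F(A_\infty^-) = A_\infty^+$, $F(A_\infty^+) = A_\infty^-$ if $s = -$, by continuity of $F$ and the corresponding endpoint identities from (a.3). This yields $A_\infty \signedarrowequal{\pm} A_\infty$ with the correct sign.

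The principal obstacle I expect is the regularity analysis in (a.3): confirming that the fiber-wise selectors $a, b$ are continuous on a large enough residual for $C$ to be a genuine band with $C^\pm$ the anticipated pseudo-curves, while carefully bookkeeping the residuals of continuity of $A$, $B$, their covers, their $F$-images, and the action of $R_\omega$ on these sets. Once this residual calculus is set up coherently, the remaining statements reduce to standard monotonicity and compactness arguments.
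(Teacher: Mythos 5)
Your proposal is essentially correct in strategy, but it takes a genuinely different route from the paper: the paper does not prove this lemma internally at all. Statement (a.1) is quoted from \cite{FJJK} (Lemma~4.15 there), statements (a.3)--(a.4) from Lemma~4.19 of \cite{FJJK}, statement (b) from Lemma~4.21 of \cite{FJJK}, and only (a.2) is observed to follow from the definitions. What the citation buys is brevity and the delegation of the delicate regularity issues (fibers of bands are intervals only over a residual set, behaviour of $m_{_{A}}$, $M_{_{A}}$ under $F$) to the machinery already built in \cite{FJJK}. What your route buys is a self-contained argument phrased entirely in the pseudo-curve formalism that the paper itself develops (Remark~\ref{pseudoband}, Lemma~\ref{CoreForResidualOfContinuity}, Lemma~\ref{GoodBands}): the fiber-wise pullback selectors $b(\theta)$ (first preimage of $M_{_{B}}\circ R_\omega$ in $A^\theta$) and $a(\theta)$ (last preimage of $m_{_{B}}\circ R_\omega$ below it) are the exact analogue of the classical interval construction, the IVT argument and the monotonicity check for (a.4) are correct, and you even obtain the stronger equalities $F(C^{\pm})=B^{\pm}$. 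The price is that you must carry the residual bookkeeping yourself, which you rightly identify as the main burden.

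Three points need patching. First, in (b) your diagonal argument produces, for $y\in\bigcap_n A_n$, a preimage lying in $\bigcap_n A_n$, not in $A_\infty=\bigl(\bigcap_n A_n\bigr)^c$; as written you prove $F(K)=K$ for $K=\bigcap_n A_n$ and still owe the fact that taking cores commutes with $F$, i.e.\ $F(K^c)=\bigl(F(K)\bigr)^c$. This is true for skew products over $R_\omega$ (choose $G\subset \rescont[K]\cap R_\omega^{-1}\bigl(\rescont[F(K)]\bigr)$ in Lemma~\ref{CoreForResidualOfContinuity} and use $F\bigl(K\cap\pi^{-1}(G)\bigr)=F(K)\cap\pi^{-1}(R_\omega(G))$), but it must be stated. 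Second, the claim $A_\infty^{\pm}=\lim_n A_n^{\pm}$ in the Hausdorff metric is neither justified (monotone pointwise convergence of $m_{_{A_n}},M_{_{A_n}}$ on a residual set does not give Hausdorff convergence of the covers) nor needed: on a countable intersection of residual sets your construction gives $f_\theta\bigl(m_{_{A_{n+1}}}(\theta)\bigr)=m_{_{A_n}}(R_\omega(\theta))$ (with top and bottom swapped when $s=-$), and letting $n\to\infty$ using continuity of $f_\theta$ yields the fiber-wise endpoint identities for $m_{_{A_\infty}}$ and $M_{_{A_\infty}}$, from which $A_{\infty} \signedarrowequal{\pm} A_{\infty}$ follows by the same core/pseudo-curve calculus as in (a.3). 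Third, a minor slip: the semicontinuity directions are reversed --- $b$ is lower semicontinuous (any limit of the first hitting points is again a preimage of $M_{_{B}}\circ R_\omega$ in the fiber, hence $\ge b(\theta)$), and $a$ becomes upper semicontinuous after first restricting to a residual set where $b$ is continuous; this does not affect the plan, since either kind of semicontinuity gives continuity on a residual subset.
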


\begin{proof}
Statement (a.1) is \cite[Lemma~4.15]{FJJK} and (a.2)
follows directly from the definitions.
Statements (a.3,4) are \cite[Lemma~4.19]{FJJK} while
statement (b) is \cite[Lemma~4.21]{FJJK}.
\end{proof}

\subsection{Loops of signed Markov graphs}

Given a combinatorial signed Markov graph $G$, a sequence of arrows
$
\alpha = I_0 \signedarrow{s_{0}} I_1 \signedarrow{s_{1}} \cdots \signedarrow{s_{m-1}} I_{m-1}
$
will be called a \emph{path of length $m$}.
The length of $\alpha$ will be denoted by $\abs{\alpha}$.
When a path begins and ends in the same vertex (i.e. $I_{m-1} =
I_{0}$) it will be called a \emph{loop}.
Observe that, then
$
I_1 \signedarrow{s_{1}} I_2 \signedarrow{s_{2}} \cdots \signedarrow{s_{m-2}} I_{m-1} \signedarrow{s_{m}} I_{1}
$
is also a loop in $G.$
This loop is called a \emph{shift} of $\alpha$ and denoted by
$S(\alpha).$
For $n \ge 0,$ we will denote by $S^{n}$ the $n$-th iterate of
the shift. That is,
\[
S^n(\alpha) = I_{j_0} \signedarrow{s_{j_0 }} I_{j_1}
                      \signedarrow{s_{j_1}} I_{j_2} \signedarrow{s_{j_2}}  \cdots
                      \signedarrow{s_{j_{m-2}}} I_{j_{m-1}},
\]
where $j_r = r + n \pmod{m}.$
Note that $S^{km}(\alpha) = \alpha$ for every $k \ge 0.$

Let
$
\alpha = I_0 \signedarrow{s_{0}} I_1 \signedarrow{s_{1}} \cdots \signedarrow{s_{m-2}} I_{m-1}
$
and
$
\beta = J_0 \signedarrow{r_{0}} J_1 \signedarrow{r_{1}} \cdots \signedarrow{r_{l-2}} J_{l-1}
$
be two paths such that the last vertex of $\alpha$ coincides with
the first vertex of $\beta$ (i.e. $I_{m-1} = J_0$).
The path
$
I_0 \signedarrow{s_{0}} I_1 \signedarrow{s_{1}} \cdots \signedarrow{s_{m-1}} J_0
    \signedarrow{r_{0}} J_1 \signedarrow{r_{1}} \cdots \signedarrow{r_{l-1}} J_{l-1}
$
is the \emph{concatenation} of $\alpha$ and $\beta$ and is denoted by
$\alpha\beta.$
In this spirit, for every $n \ge 1,$ $\alpha^n$ will denote the
concatenation of $\alpha$ with himself $n$-times.
the path $\alpha^n$ will be called the \emph{$n$-repetition of
$\alpha$}. Also, $\alpha^\infty$ will denote the infinite path
$\alpha\alpha\alpha\cdots$.

A loop is called \emph{simple} if it is not a repetition of a shorter
loop. Observe that, in that case, the length of the shorter loop
divides the length of the long one.

The next lemma translates the non-repetitiveness of a loop to conditions on its liftings.
Its proof is folk knowledge.

\begin{lemma}\label{simpleisdifferent}
Let $\alpha$ be a signed loop of length $n$ in a combinatorial signed Markov graph $G$.
If $\alpha$ is simple, $S^i(\alpha) \ne S^j(\alpha)$ for every $i \ne j.$
\end{lemma}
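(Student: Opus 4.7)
The plan is to argue by contradiction using the fact that $S$ acts as a cyclic rotation on the $n$-tuple of (vertex, sign) data of the loop. Suppose $S^i(\alpha) = S^j(\alpha)$ for some $0 \le i < j < n$. Composing both sides with $S^{n-i}$ (using $S^{km}(\alpha) = \alpha$, so $S^{n-i}\circ S^j = S^{j-i}$ on $\alpha$) reduces the problem to the case $S^d(\alpha) = \alpha$ with $d := j - i$ and $0 < d < n$.

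Next, I would unpack this invariance. Writing
\[
\alpha = I_0 \signedarrow{s_0} I_1 \signedarrow{s_1} \cdots \signedarrow{s_{n-1}} I_n,
\]
with $I_n = I_0$, the identity $S^d(\alpha) = \alpha$ translates, by the displayed formula defining $S^n$ in the excerpt, into the coupled equalities $I_{(k+d) \bmod n} = I_k$ and $s_{(k+d) \bmod n} = s_k$ for every $0 \le k < n$. Iterating in $d$ extends this invariance to shifts by every multiple of $d$ modulo $n$. Since the cyclic subgroup of $\mathbb{Z}/n\mathbb{Z}$ generated by $d$ coincides with the one generated by $g := \gcd(d,n)$ (by B\'ezout), we obtain $I_{(k+g) \bmod n} = I_k$ and $s_{(k+g) \bmod n} = s_k$ for every $k$.

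The key step is then to exhibit a shorter loop whose repetition equals $\alpha$. Because $0 < d < n$, we have $g < n$, and by construction $g$ divides $n$. Specialising the invariance at $k = 0$ yields $I_g = I_0$, so
\[
\beta := I_0 \signedarrow{s_0} I_1 \signedarrow{s_1} \cdots \signedarrow{s_{g-1}} I_g
\]
is itself a loop of length $g$; and the full invariance (applied to every $k$) says precisely that the $(n/g)$-repetition $\beta^{n/g}$ reproduces $\alpha$ arrow by arrow. This contradicts the simplicity of $\alpha$, completing the proof.

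There is no real obstacle here; the only delicate point is the bookkeeping, namely tracking that both the vertex sequence and the sign sequence inherit the same periodicity $g$ under iterated shifts, which the B\'ezout step handles uniformly.
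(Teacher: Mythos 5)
Your proof is correct: the reduction to $S^{d}(\alpha)=\alpha$ with $0<d<n$, the passage via B\'ezout to periodicity of both the vertex and sign sequences with period $g=\gcd(d,n)$, and the identification $\alpha=\beta^{n/g}$ for the initial length-$g$ segment $\beta$ (a loop, since $I_g=I_0$, with $n/g\ge 2$) contradicts simplicity exactly as required. The paper offers no proof of this lemma — it is explicitly dismissed as folk knowledge — so there is nothing to compare against; your argument is precisely the standard one the authors have in mind.
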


Given a path
$
\alpha = I_0 \signedarrow{s_0} I_1 \signedarrow{s_1} \ldots I_{m-1} \signedarrow{s_{m-1}} I_{m}
$
we define the \emph{sign} of $\alpha$, denoted by $\Sign(\alpha),$
as $\prod_{i = 1}^{m} s_i,$
where in this expression we use the obvious multiplication rules:
\begin{align*}
& + \cdot + = − \cdot − = +,\text{ and}\\
& + \cdot − = − \cdot + = −.
\end{align*}

Finally we introduce a (lexicographical) \emph{ordering} in the set
of paths of signed combinatorial graphs.
To this end we start by introducing a linear ordering in the set of
vertices. This ordering is arbitrary but fixed.

In the case of Markov graphs, the spatial labelling of orbits induces
a natural ordering in the set of basic intervals or basic bands,
which is the ordering that we are going to adopt.
More precisely, if $P = \{p_0, p_1,\dots,p_{n-1}\}$ is a periodic
orbit with the spatial labelling, then we endow the set of vertices
(basic intervals) of the associated signed Markov graph with the
following ordering:
\[
 [p_0,p_1] < [p_1,p_2] < \dots < [p_{n-2},p_{n-1}].
\]
Analogously, if $\cB = \{B_0, B_1,\dots,B_{n-1}\}$ is a periodic orbit
of strips with the spatial labelling, then we endow the set of
vertices (basic intervals) of the associated signed Markov graph with
the following  ordering:
\[
I_{_{B_{0}B_{1}}} < I_{_{B_{1}B_{2}}} < \dots < I_{_{B_{n-2}B_{n-1}}}.
\]

Then, the above ordering in the set of vertices naturally induces
a \emph{lexicographical ordering} in the set of paths of the
signed combinatorial graph as follows.
Let
\begin{align*}
\alpha & = I_0 \signedarrow{s_{0}}
         I_1 \signedarrow{s_{1}} \cdots
         I_{n-1} \signedarrow{s_{n-1}} I_{n}
\ \text{and}\\
\beta &= J_0 \signedarrow{r_{0}}
         J_1 \signedarrow{r_{1}} \cdots
         J_{m-1} \signedarrow{r_{m-1}} J_{m}
\end{align*}
be paths such that there exists $k \le \min\{n,m\}$
with $I_k \ne J_k$ and $I_i = J_i$ for $i=0,1,\dots,k-1$
(recall that, by Definition~\ref{sig-arrow},
if $I_i = J_i$ then the signs $s_i$ and $r_i$ of the
corresponding arrows coincide).
We write $\alpha < \beta$ if and only if
\[
\begin{cases}
I_k < J_k & \text{when $s = +$, or} \\
I_k > J_k & \text{when $s = -$,}
\end{cases}
\]
where $s = \Sign\left(
  I_0 \signedarrow{s_{0}}
  I_1 \signedarrow{s_{1}} \cdots
  I_{k-1} \signedarrow{s_{k-1}} I_{k}
\right) = s_0s_1\cdots s_{k-1}.$

Next we relate the loops in signed Markov graphs with periodic orbits.

\begin{definition}\label{asoc-int}
Let $f \in \cI$ and let $p$ be a periodic point of $f$
and let
\[
\alpha = J_0 \signedarrow{s_{0}} J_1 \signedarrow{s_{1}} \cdots
J_{n-1} \signedarrow{s_{n-1}} J_{0}
\]
be a loop in the $P$-signed Markov graph of $f.$
We say that $\alpha$ and $p$ are \emph{associated}
if $p$ has period $n$ and $f^{i}(p) \in J_{i}$ for every $i = 0,1, \ldots, n-1.$
Observe that in such case $S^{m}(\alpha)$ and $f^m(p)$ are associated for all $m \ge 1.$
\end{definition}

The next lemma relates the ordering of periodic points with the ordering of the associated loops.
Its proof is a simple exercise.

\begin{lemma}\label{lazosintervalasociados}
Let $f \in \cI$ and let $f_P$ be a $P$-linear map, where $P$
is a periodic orbit. Let $x$ and $y$ be two distinct periodic points
of $f_P$ associated respectively to two distinct loops $\alpha$ and $\beta$ in
the $P$-signed Markov graph of $f_P.$
Then $x < y$ if and only if $\alpha < \beta.$
Consequently, for every $n \ge 1,$
$f^n(x) < f^n(y)$ if and only if $S^n(\alpha) < S^n(\beta).$
\end{lemma}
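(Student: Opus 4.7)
The plan is to exploit the piecewise-affine structure of $f_P$: on each basic interval $J$ the map $f_P\evalat{J}$ is strictly monotone (because $f\evalat{P}$ is a permutation of $P$, so $f_P$ cannot be constant on any basic interval), and the sign of every arrow leaving $J$ coincides with the orientation of $f_P\evalat{J}$ ($+$ for increasing, $-$ for decreasing). Write
\[
\alpha = J_0 \signedarrow{s_{0}} J_1 \signedarrow{s_{1}} \cdots \signedarrow{s_{n-1}} J_0, \qquad
\beta = K_0 \signedarrow{r_{0}} K_1 \signedarrow{r_{1}} \cdots \signedarrow{r_{m-1}} K_0,
\]
and let $k \le \min\{n, m\}$ be the first index at which $J_k \ne K_k$, so that $J_i = K_i$ and $s_i = r_i$ for every $i < k$ (the coincidence of signs being forced by Definition~\ref{sig-arrow}).

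The core step is to establish by induction on $k$ that on the set
\[
L := \set{z \in J_0}{f_P^i(z) \in J_i \text{ for every } i = 0,1,\dots,k-1}
\]
the iterate $f_P^k$ is affine and strictly monotone, maps $L$ onto $f_P(J_{k-1})$ (which contains both $J_k$ and $K_k$), and has orientation $s = s_0 s_1 \cdots s_{k-1} = \Sign(J_0 \signedarrow{s_{0}} \cdots \signedarrow{s_{k-1}} J_k)$. Each inductive step restricts the previous iterate to the preimage of $J_i$ inside $J_{i-1}$ and composes with the affine monotone restriction $f_P\evalat{J_{i-1}}$, multiplying the orientation by $s_{i-1}$. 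Since $x$ and $y$ both traverse the common prefix $J_0, J_1, \dots, J_{k-1}$, they both lie in $L$.

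Before comparing, I would observe that $f_P^k(x) \ne f_P^k(y)$: otherwise the forward orbits of $x$ and $y$ would agree from step $k$ on; both being periodic, they would share the same orbit as a set, hence the same period $p$, and applying $f_P^{p - (k \bmod p)}$ would yield $x = y$, a contradiction. Since $J_k$ and $K_k$ are distinct basic intervals (sharing at most an endpoint in $P$), the assumption $J_k < K_k$ combined with $f_P^k(x) \ne f_P^k(y)$ forces $f_P^k(x) < f_P^k(y)$, and symmetrically for $J_k > K_k$. Using the monotonicity of $f_P^k\evalat{L}$ of orientation $s$, this gives $x < y$ iff $J_k < K_k$ when $s = +$, and $x < y$ iff $J_k > K_k$ when $s = -$; by the definition of the lexicographic ordering this is exactly $\alpha < \beta$. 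The \emph{consequently} part is then immediate: $f_P^n(x)$ and $f_P^n(y)$ are distinct periodic points (by the same injectivity argument) associated respectively to the distinct loops $S^n(\alpha)$ and $S^n(\beta)$ (since $S$ is a bijection on loops), so the first assertion applies.

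The main technical obstacle is the inductive identification of the orientation of $f_P^k\evalat{L}$ with $\Sign(J_0 \signedarrow{s_0} \cdots \signedarrow{s_{k-1}} J_k)$; once this affine bookkeeping is in place, the ordering comparison at the first coordinate of divergence is essentially automatic.
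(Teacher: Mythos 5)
Your proof is correct. The paper supplies no argument for this lemma at all (it is dismissed with ``Its proof is a simple exercise''), and your argument --- that $f_P^k$ is affine and strictly monotone, with orientation $s_0s_1\cdots s_{k-1}$, on the set of points whose itinerary follows the common prefix of the two loops, followed by the comparison of $f_P^k(x)\in J_k$ and $f_P^k(y)\in K_k$ at the first index of divergence --- is exactly the standard bookkeeping the authors have in mind, so there is nothing to correct.
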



The next lemma is folk knowledge but we include the proof
because we are not able to provide an explicit reference for it.

\begin{lemma}\label{lazo-unico}
Let $\tau$ be a pattern and let $f_{\tau} = f_P$ be a $P$-linear
map, where $P$ is a periodic orbit of $f_P$ of pattern $\tau.$
Assume that $\{q_0,q_1,q_2,\ldots,q_m\}$ is a periodic orbit of
$f_{\tau}$ with pattern $\nu \ne \tau.$ Then there exists a unique
loop $\alpha$ in the $P$-signed Markov graph of $f_P$ associated to
$q_0.$
Moreover, $\alpha$ is simple.
\end{lemma}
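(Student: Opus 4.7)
The plan is to first locate the orbit precisely inside the basic intervals to get existence and uniqueness of the loop, and then exploit the piecewise linearity of $f_\tau$ to rule out repetition.

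\textbf{Existence and uniqueness of $\alpha$.} First I would observe that, being two distinct periodic orbits of the same map, $\{q_0,q_1,\ldots,q_m\}$ and $P$ are disjoint. Next I would show that the whole orbit lies inside $[\min P, \max P]$: if some $q_i$ were outside this interval, then by Definition~\ref{connect-the-dots} we would have $f_\tau(q_i) \in \{f(\min P), f(\max P)\} \subset P$, contradicting the disjointness just established. Hence each $q_i$ lies in $[\min P,\max P] \setminus P$, which is precisely the union of the interiors of the $P$-basic intervals. Therefore $q_i$ belongs to the interior of a uniquely determined $P$-basic interval $J_i$, and since $f_\tau$ is monotone on each basic interval the sign of the arrow $J_i \signedarrow{s_i} J_{i+1}$ is also forced by Definition~\ref{sig-arrow}. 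This produces exactly one loop $\alpha = J_0 \signedarrow{s_0} J_1 \signedarrow{s_1} \cdots \signedarrow{s_{n-1}} J_0$ (with $n=m+1$) associated to $q_0$.

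\textbf{Simplicity of $\alpha$.} Assume for contradiction that $\alpha = \beta^k$ with $k \ge 2$ where $\beta = J_0 \signedarrow{} J_1 \signedarrow{} \cdots \signedarrow{} J_{l-1} \signedarrow{} J_0$ has length $l = n/k$. Define
\[
J_0' := \set{x \in J_0}{f_\tau^i(x) \in J_i \text{ for all } i=0,1,\dots,l}.
\]
Since each $J_i$ is a basic interval and $f_\tau$ is affine on each basic interval, $J_0'$ is a (closed) subinterval of $J_0$ and $f_\tau^l$ restricted to $J_0'$ is the restriction of a single affine map. By a standard covering argument for Markov partitions (essentially Lemma~\ref{prop-band}(a.1) transferred to the interval through Remark~\ref{graphunique-int}), $f_\tau^l(J_0') = J_0$. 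In particular, since $J_0' \subset J_0$, the slope $\lambda$ of this affine map satisfies $|\lambda| \ge 1$, and $|\lambda|=1$ would force $J_0' = J_0$.

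\textbf{Ruling out $|\lambda|=1$.} If $|\lambda|=1$, then $f_\tau^l$ acts as an isometry of $J_0$ onto itself; an affine isometry of a closed interval to itself is either the identity or the reflection about the midpoint. In either case $f_\tau^l$ has no orbit of period $\ge 3$ on $J_0$, and its only possible $2$-orbit (in the reflection case) is symmetric about the fixed midpoint. Since each of $q_0, q_l, q_{2l}, \ldots, q_{(k-1)l}$ lies in $J_0'=J_0$ and the indices form a single $f_\tau^l$-orbit of cardinality $k$, we obtain $k\le 2$; a closer look at the reflection case shows that the unique possible $2$-orbit for $f_\tau^l$ forces $f_\tau^{2l}$ to be the identity on $J_0$, which would produce uncountably many periodic points of $f_\tau$ and contradict $f_\tau$ being piecewise linear with isolated periodic points (or, alternatively, contradicts the fact that the entire orbit $\{q_0,\dots,q_m\}$ has pattern distinct from $\tau$). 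Hence $|\lambda|>1$.

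\textbf{Expanding argument.} With $|\lambda|>1$, the map $g := f_\tau^l|_{J_0'}$ is a strictly expanding affine map. The points $q_0, q_l, \ldots, q_{(k-1)l}$ lie in $J_0$, and because the loop starting at each of them is a cyclic shift of $\alpha = \beta^k$, their forward $f_\tau$-itinerary follows $\beta$ for the next $l$ steps, so they all lie in $J_0'$. They form a $g$-orbit of cardinality $k$. But $g^k$ is affine of slope $\lambda^k$ with $|\lambda^k|>1$ on the subinterval where it is defined, hence has at most one fixed point; so $g$ cannot carry an orbit with $k \ge 2$ points, a contradiction. Therefore $\alpha = \beta^k$ is impossible for $k\ge 2$, and $\alpha$ is simple.

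The main obstacle I expect is the careful bookkeeping in the ``$|\lambda|=1$'' case, which does not arise for generic $\tau$-linear maps but needs to be excluded in full generality; the rest reduces to routine piecewise-linear dynamics plus the Markov-graph correspondence already set up in Remark~\ref{graphunique-int}.
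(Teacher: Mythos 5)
Your existence/uniqueness argument and the first half of the simplicity argument follow essentially the paper's route (the paper quotes \cite[Lemmas~1.2.12 and 1.2.6]{ALM} for what you prove by hand: locating the orbit in the interiors of the basic intervals, pulling back along $\beta$ to an interval $K_0\subset J_0$ with $f_\tau^{\ell}(K_0)=J_0$ affine, and concluding that the slope must be $\pm 1$, forcing slope $-1$, $k=2$ and $K_0=J_0$). The genuine gap is in your treatment of the reflection case $|\lambda|=1$. Your primary way out --- ``uncountably many periodic points contradict $f_\tau$ being piecewise linear with isolated periodic points'' --- is simply false: $\tau$-linear maps do have basic intervals on which a power of $f_\tau$ acts as the slope $-1$ reflection, and hence intervals full of periodic points (already for $\tau$ the $2$-cycle, $f_\tau$ on the unique basic interval is such a reflection). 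That example also shows the reflection case cannot be dismissed abstractly: there every non-midpoint $q_0$ is a period-$2$ point whose associated loop is the square of the $1$-loop, i.e.\ non-simple, so any correct proof must use the hypothesis $\nu\ne\tau$ exactly at this point. Your fallback clause (``or, alternatively, contradicts the fact that the orbit has pattern distinct from $\tau$'') is not an argument; it is precisely the statement that still has to be established in this case.

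What the paper does there, and what is missing from your proposal, is to bring $P$ itself into play: once $K_0=J_0$ and $f_\tau^{\ell}\evalat{J_0}$ is the onto reflection, the two endpoints of $J_0$ --- which are points of $P$ --- are exchanged by $f_\tau^{\ell}$ and, since $f_\tau^{j}(J_0)\subset J_j$, their orbit shadows the same itinerary $J_0,J_1,\dots,J_{\ell-1}$ as the orbit of $q_0$. From this one concludes that $P$ and $\{q_0,\dots,q_m\}$ have the same period and the same pattern, contradicting $\nu\ne\tau$. Supplying this comparison between the endpoint orbit of $J_0$ and the orbit of $q_0$ (period and spatial ordering) is what your proof still needs; with it, the rest of your write-up is a correct, if more verbose, version of the paper's argument.
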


\begin{proof}
The existence and unicity of the loop $\alpha$ follows from
\cite[Lemma~1.2.12]{ALM}. We have to show that $\alpha$
is simple.
Assume that $\alpha$ is the $k$ repetition of a loop
\[
\beta = J_0 \signedarrow{s_{0}} J_1 \signedarrow{s_{1}} \cdots
J_{\ell-1} \signedarrow{s_{\ell-1}} J_{0}
\]
of length $\ell$ with $k \ge 2$ and $m = k\ell.$
By \cite[Lemma~1.2.6]{ALM}, there exist intervals
$
K_0 \subset J_0, K_1 \subset J_1, \ldots,
K_{\ell-1} \subset J_{\ell-1}
$
such tat $K_i \signedarrowequal{s_i} K_{i+1}$ for
$i=0,1,\dots,\ell-2$ and
$K_{\ell-1} \signedarrowequal {s_{\ell-1}} J_{0}.$
Clearly, since $f_P$ is $P$-linear,
$f_P^\ell\evalat{K_0}$ is an affine map from $K_0$ onto $J_0$.
On the other hand, since $q_0$ is associated to $\alpha = \beta^k$ it
follows that
$
f_P^i(q_0),f_P^{i + \ell}(q_0),\dots,f_P^{i + (k-1)\ell}(q_0) \in J_i
$
for $i=0,1,\dots,\ell-1$ and, consequently,
$q_0,f_P^{\ell}(q_0),\dots,f_P^{(k-1)\ell}(q_0) \in K_0.$
Consequently, since
$f_P^\ell\left(f_P^{(k-1)\ell}(q_0)\right) = f_p^{m}(q_0) = q_0,$
it follows that
$\{q_0,f_P^{\ell}(q_0),\dots,f_P^{(k-1)\ell}(q_0)\}$ is a periodic
orbit $f_P^\ell\evalat{K_0}$ with period $k \ge 2$.
The affinity of $f_P^\ell\evalat{K_0}$ implies that
$f_P^\ell\evalat{K_0}$ is decreasing with slope -1 and $k = 2$.
The fact that $f_P^\ell\evalat{K_0}(K_0) = J_0$ implies that
$K_0 = J_0$ and the endpoints of $J_0$ are also a periodic orbit
of $f_P^\ell\evalat{K_0}$ of period 2. In this situation $P$ and
$\{q_0,q_1,q_2,\ldots,q_m\}$ both have the same period and pattern;
a contradiction.
\end{proof}

Now we want to extend the notion of associated periodic orbit and loop
and Lemma~\ref{lazosintervalasociados} to periodic orbits of strips.

\begin{definition}\label{asoc-band}
Let $F \in \cSO$ and let and let $\cB$ be a periodic orbit of strips of $F$.
We say that a loop
\[
\alpha = J_0 \signedarrow{s_{0}} J_1 \signedarrow{s_{1}} \cdots
J_{n-1} \signedarrow{s_{n-1}} J_{0}
\]
in the $\cB$-signed Markov graph of $F$ and a strip $A$ are
\emph{associated} if $A$ is an $n-$periodic strip of $F$ and
$F^{i}(A) \in J_{i}$ for every $i = 0,1, \ldots, n-1.$
Observe that in such case $S^{m}(\alpha)$ and $F^m(A)$ are associated
for all $m \ge 1.$
\end{definition}

The next lemma extends Lemma~1.2.7 and Corollary~1.2.8 of
\cite{ALM} to quasiperiodically forced skew products on the cylinder.

\begin{lemma}\label{lazoasociadobandas}
Let $F \in \cSO$ and let $J_{0},J_{1},\ldots,J_{n-1}$
be basic bands such that
\[
\alpha = J_0 \signedarrow{s_{0}} J_1 \signedarrow{s_{1}} \cdots
J_{n-1} \signedarrow{s_{n-1}} J_{0}
\]
is a simple loop in a signed Markov graph of $F.$
Then there exists a periodic band $C \subset J_0$
associated to $\alpha$ (and hence of period $n$).
Moreover, for every $i,j \in \{0,1,\dots,n-1\},$
$F^i(C) < F^j(C)$ if and only if $S^i(\alpha) < S^j(\alpha).$
\end{lemma}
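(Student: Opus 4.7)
The plan is to prove the lemma in three stages: construct the band $C,$ verify the ordering equivalence, and then deduce that the period is exactly $n.$

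Stage~1 (Construction of $C$). I would iteratively pull back the loop $\alpha$ through Lemma~\ref{prop-band}(a.3), combined with (a.2). Starting from the arrow $J_{n-1} \signedarrow{s_{n-1}} J_{0},$ (a.3) provides a band $D_{n-1} \subset J_{n-1}$ with $D_{n-1} \signedarrowequal{s_{n-1}} J_{0}.$ Moving backwards along $\alpha,$ (a.2) replaces the target $J_{i+1}$ by its subband $D_{i+1},$ and (a.3) yields $D_{i} \subset J_{i}$ with $D_{i} \signedarrowequal{s_{i}} D_{i+1}.$ Composing the signed arrows gives a signed covering $D_{0} \signedarrow{s} J_{0}$ under $F^{n}$ with $s = \Sign(\alpha),$ and since $D_{0} \subset J_{0},$ (a.2) upgrades this to $D_{0} \signedarrow{s} D_{0}.$ Applying Lemma~\ref{prop-band}(b) to $F^{n},$ which still lies in $\cSO,$ and to $D_{0}$ produces the desired band $C \subset D_{0} \subset J_{0}$ with $F^{n}(C) = C,$ so that $F^{i}(C) \subset D_{i} \subset J_{i}$ for every $i.$

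Stage~2 (Ordering). Fix $i \ne j$ in $\{0,1,\dots,n-1\}$ and let $k$ be the first position where the vertex sequences of $S^{i}(\alpha)$ and $S^{j}(\alpha)$ differ. The bands $F^{i+k}(C)$ and $F^{j+k}(C)$ lie in distinct basic bands, hence inherit a strict ordering; for $k = 0$ this matches the lexicographic convention of empty sign $+,$ and we are done. For $k \ge 1,$ I would propagate the ordering backwards from position $k$ to position $0$ by iterated use of (a.4). At step $r,$ both $F^{i+r-1}(C)$ and $F^{j+r-1}(C)$ sit in the common basic band $J_{(i+r-1)\bmod n} = J_{(j+r-1)\bmod n}$ and map under $F$ into $F^{i+r}(C)$ and $F^{j+r}(C)$ respectively; the sign $s_{r-1}$ preserves the ordering when positive and inverts it when negative. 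The cumulative sign $s_{0}s_{1}\cdots s_{k-1}$ produced in this way is exactly the sign entering the definition of the lexicographic ordering, which yields the equivalence.

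Stage~3 (Period is exactly $n$). If $C$ had period $m$ properly dividing $n,$ then $F^{m}(C) = C,$ but simplicity of $\alpha$ together with Lemma~\ref{simpleisdifferent} forces $S^{m}(\alpha) \ne S^{0}(\alpha).$ These distinct shifts are comparable in the lex order, so Stage~2 forces $F^{m}(C) \ne C$ (since $A < B$ implies $A \ne B$ by the definition of strict ordering on strips), a contradiction. Thus the period equals $n,$ and strict pairwise ordering of $F^{0}(C), F^{1}(C), \dots, F^{n-1}(C)$ emerges as a byproduct of Stage~2.

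The main obstacle I anticipate is Stage~2: Lemma~\ref{prop-band}(a.4) as stated concerns the specific bands produced by (a.3), whereas $F^{i+r-1}(C)$ is not a priori one of those canonical pullback bands. I expect to need a mild strengthening of (a.4) allowing arbitrary sub-bands $\hat{C} \subset A$ with prescribed $F(\hat{C}) \subset B,$ which should follow by combining (a.4) with (a.2) after shrinking the preimage; however, the iterated sign bookkeeping through the whole inductive propagation is where the care is most concentrated.
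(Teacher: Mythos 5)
Your Stage~1 and Stage~3 coincide with the paper's construction and endgame, but the gap you yourself flag in Stage~2 is fatal as proposed: the ``mild strengthening'' of Lemma~\ref{prop-band}(a.4) to arbitrary sub-bands is false. If $A \signedarrow{+} B$ and $A \signedarrow{+} \widetilde{B}$ with $B \le \widetilde{B}$, two arbitrary bands $\hat{C}, \hat{C}' \subset A$ with $F(\hat{C}) \subset B$ and $F(\hat{C}') \subset \widetilde{B}$ need \emph{not} satisfy $\hat{C} \le \hat{C}'$, because nothing forces $F$ to be ``monotone'' inside $A$: the ordering in (a.4) is a property of the particular pullback bands produced by (a.3), not of all preimage sub-bands. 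Already for an uncoupled map $F(\theta,x) = (R_\omega(\theta), f(x))$ with $f$ piecewise linear taking the values $0,\,1,\,1/4,\,1$ at $0,\,1/3,\,2/3,\,1$, the band $A = \SI\times\I$ positively $F$-covers $B = \SI\times[0,1/2]$ and $\widetilde{B} = \SI\times[1/2,1]$, yet $\hat{C} = \SI\times[2/3,\,2/3+1/9]$ maps into $B$ while $\hat{C}' = \SI\times[1/6,1/3]$ maps into $\widetilde{B}$, and $\hat{C} > \hat{C}'$. Since your backward propagation is applied to the actual orbit bands $F^{i+r}(C)$, which are exactly such uncontrolled sub-bands of a common basic band, the inductive step collapses; this cannot be repaired by ``shrinking the preimage'' because the orbit bands are already fixed by Stage~1. (A smaller issue: bands lying in distinct basic bands only inherit a \emph{weak} ordering, since adjacent basic bands may share a pseudo-curve when the intermediate strip is pinched; strictness has to come at the end, from disjointness of the distinct orbit bands together with Lemma~\ref{simpleisdifferent}.)

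The paper's proof is organized precisely to avoid comparing orbit bands directly. For each basic band $A$ it first fixes the ordered family $K(A,B_1) \le \dots \le K(A,B_m)$ of canonical pullbacks given by (a.3,4), and then pulls the loop back through these canonical bands for $2n$ steps (not $n$), producing bands $K_{2n-1},\dots,K_0$ with $K_m \subset K\bigl(J_{m \bmod n}, J_{m+1 \bmod n}\bigr)$ and $F(K_m) = K_{m+1}$. The lexicographic claim is proved for the $K_m$'s themselves: at the first-difference position the two bands lie in distinct basic bands, and one backward step places them inside the two distinct canonical pullbacks $K(A, J_{k+i})$ and $K(A, J_{k+j})$, which (a.4) orders according to the sign; the sign bookkeeping is then carried along the common initial segment. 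The doubling to $2n$ bands is what makes this legitimate, since the index $k+\max(i,j)$ can reach $2n-2$ and a single-period family with wraparound (your $D_0,\dots,D_{n-1}$) would make the comparison circular. Only after this ordering is established is $C$ produced inside $K_0$, so that $F^i(C) \subset K_i$ and the orbit inherits the (weak) ordering of the $K_i$'s, upgraded to the strict one as above. In short: the ordering information must be built into the construction of the pullback bands; it cannot be recovered a posteriori from the orbit of $C$, which is the step your plan relies on.
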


\begin{proof}
Let $A$ be a basic band and let $B_1 \le B_2 \le \dots \le B_m$ be all
basic bands $F$-covered by $A.$
By Lemma~\ref{prop-band}(a.3,4) there exist bands
$K(A,B_1) \le K(A,B_2) \le \dots \le K(A,B_m)$ contained in $A$
such that $K(A,B_i) \signedarrowequal{s_A} B_i$ for $i=1,2,\dots,m,$
where $s_A$ denotes the sign of all arrows $A \signedarrow{s_A} B_i$
(see Definition~\ref{signedcovering}).

Now we recursively define a family of $2n$ bands in the following way.
We set $K_{2n-1} := K(J_{n-1},J_0) \subset J_{n-1}$ so that
$K_{2n-1} \signedarrowequal{s_{n-1}} J_0.$

Then, assume that $K_{j} \subset J_{j \pmod{n}}$ have already been
defined for $j=i+1,i+2,\dots, 2n-1$ and $i\in \{0,1,\dots, 2n-2\}.$
Since
$J_{\widetilde{\imath}} \signedarrow{s_{\widetilde{\imath}}} J_{i+1 \pmod{n}}$
with $\widetilde{\imath} = i \pmod{n},$ by Lemma~\ref{prop-band}(a.2,3),
there exists a band
$
K_{i} \subset K\left(J_{\widetilde{\imath}},J_{i+1 \pmod{n}}\right) \subset J_{\widetilde{\imath}}
$
such that $K_{i} \signedarrowequal{s_{\widetilde{\imath}}} K_{i+1}.$

Now we claim that for every $i,j \in \{0,1,\dots,n-1\}$
$S^i(\alpha) < S^j(\alpha)$ is equivalent to $K_i \le K_j.$
If $S^i(\alpha) \ne S^j(\alpha)$
there exists $k\in \{0,1,\dots,n-1\}$ such that
\begin{align*}
S^i(\alpha) &= J_{i} \signedarrow{s_i}
               J_{i+1} \signedarrow{s_{i+1}} \cdots
               J_{k+i-1} \signedarrow{s_{k+i-1}}
               J_{k+i} \signedarrow{s_{k+i}} J_{k+i+1} \cdots \text{ and}\\
S^j(\alpha) &= J_{i} \signedarrow{s_i}
               J_{i+1} \signedarrow{s_{i+1}} \cdots
               J_{k+i-1} \signedarrow{s_{k+i-1}}
               J_{k+j} \signedarrow{s_{k+j}} J_{k+j+1} \cdots
\end{align*}
with $J_{k+i \pmod{n}} \ne J_{k+j \pmod{n}}$
(where every sub-index in the above paths must be read modulo $n$).
By construction, $K_{k+i} \subset J_{k+i \pmod{n}}$ and $K_{k+j} \subset J_{k+j \pmod{n}}.$
Hence,
$K_{k+i} \le K_{k+j}$ if and only if $J_{k+i \pmod{n}} < J_{k+j \pmod{n}}.$
By definition
\[
K_{k+i-1} \signedarrowequal{s_{k+i-1 \pmod{n}}} K_{k+i}
\quad\text{and}\quad
K_{k+i-1} \subset K\left(J_{k+i-1 \pmod{n}},J_{k+i \pmod{n}}\right),
\]
and
\[
K_{k+j-1} \signedarrowequal{s_{k+i-1 \pmod{n}}} K_{k+j}
\quad\text{and}\quad
K_{k+j-1} \subset K\left(J_{k+i-1 \pmod{n}},J_{k+j \pmod{n}}\right).
\]
Thus,
$K_{k+i-1} \le K_{k+j-1}$ if and only if
$K_{k+i} \le K_{k+j}$ and $s_{k+i-1 \pmod{n}} = +.$
So, $K_{k+i-1} \le K_{k+j-1}$ if and only if
$J_{k+i \pmod{n}} < J_{k+j \pmod{n}}$ and $s_{k+i-1 \pmod{n}} = +.$
By iterating this argument $k-1$ times backwards we get that
$K_{i} \le K_{j}$ if and only if $J_{k+i \pmod{n}} < J_{k+j \pmod{n}}$
and
\[
\Sign\left(J_{i} \signedarrow{s_i}
           J_{i+1} \signedarrow{s_{i+1}} \cdots
           J_{k+i-1} \signedarrow{s_{k+i-1}}
           J_{k+i}\right) =
s_{i}s_{i+1} \cdots s_{k+i-1} = +
\]
(where every sub-index in the above formula is modulo $n$).
This ends the proof of the claim.

Observe that, since $K_n \subset J_0,$ from the construction
of the sets $K_n$ we get that
$K_{n-1} \subset K_{2n-1},K_{n-2} \subset K_{2n-2},\dots, K_0\subset K_n$
and
$K_{0} \signedarrowequal[F^n]{\Sign(\alpha)} K_{n}.$
Then, by Lemma~\ref{prop-band}(a.2,b) there exists a band
$C \subset K_0 \subset J_0$ such that
$
C \signedarrowequal[F^n]{\text{\scalebox{.7}{$\Sign(\alpha)$}}} C
$
and $F^i(C) \subset K_i \subset J_i$ for $i=0,1,\dots,n-1.$

Since $C$ is a periodic strip, $F^i(C)$ and $F^j(C)$ are either disjoint or equal.
Hence, by the claim,
$F^i(C) < F^j(C)$ if and only if $S^i(\alpha) < S^j(\alpha).$
Now, Lemma~\ref{simpleisdifferent} tells us that
$S^i(\alpha) \ne S^j(\alpha)$ whenever $i \ne j.$
Consequently, $F^i(C) \ne F^j(C)$ whenever $i \ne j$ and $C$ has period $n.$
This ends the proof of the lemma.
\end{proof}

\begin{remark}\label{4horseshoe}
From the construction in the above proof it follows that if
$F \in \cSO$ and
\[
\alpha = J_0 \signedarrow{s_{0}} J_1 \signedarrow{s_{1}} \cdots
J_{n-1} \signedarrow{s_{n-1}} J_{0}
\]
is a loop in the a signed Markov graph of $F$ by basic bands,
then there exist bands
$K_0 = K_0(\alpha) \subset J_0,\ K_1 \subset J_1,\ \ldots,\ K_{n-1} \subset J_{n-1}$
such that $K_{i} \signedarrowequal{s_i} K_{i+1}$ for $i=0,1,\dots, n-2$
and $K_{n-1} \signedarrowequal{s_{n-1}} J_0$.
In particular,
$
K_0 \signedarrowequal[F^n]{\text{\scalebox{.7}{$\Sign(\alpha)$}}} J_0.
$
Moreover, if $\beta$ is another loop such that $\alpha^\infty \ne \beta^\infty,$
then $K_0(\alpha)$ and $K_0(\beta)$ have pairwise disjoint interiors.
\end{remark}

\subsection{Proof of Theorem~\ref{teo-prin-A}}

We start this subsection with a lemma that studies the periodic
orbits of the uncoupled quasiperiodically forced skew-products on the
cylinder (in particular for the maps $F_\tau$).

\begin{lemma}\label{patternequiv}
Let $f \in \cI$ and let $F$ be a map from $\cSO$ such that
$F(\theta,x) = (R_{\omega}(\theta),f(x)).$
Then, the following statements hold.
\begin{enumerate}[(a)]
\item Assume that $P = \{p_{1},p_{2},\ldots,p_{n}\}$ is a periodic
orbit of $f$ with pattern $\tau.$
Then $\SI \times P$ is a periodic orbit of $F$ with pattern $\tau.$

\item If $B$ is a periodic orbit of strips of $F$ with pattern $\tau$ then
there exists a periodic orbit $P$ of $f$ with pattern $\tau$ such
that $\SI \times P$ is a periodic orbit of $F$ with pattern $\tau$ and
$\SI \times P \subset B.$
In particular, every cyclic permutation is a pattern of a function of
$F \in \cSO.$
\end{enumerate}
\end{lemma}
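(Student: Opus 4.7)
Part~(a) is immediate. Each set $\SI \times \{p_i\}$ is a continuous horizontal circle, hence a pseudo-curve and therefore a band by Remark~\ref{pseudocurves-properties}. Since $F$ is uncoupled, $F(\SI \times \{p_i\}) = \SI \times \{f(p_i)\} = \SI \times \{p_{\tau(i)}\}$. The $n$ circles are pairwise disjoint (the $p_i$ are distinct) and pairwise ordered by the spatial order of the $p_i$, so $\{\SI \times \{p_1\}, \ldots, \SI \times \{p_n\}\}$ is a periodic orbit of bands with strips pattern $\tau$.

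For part~(b) the strategy is to produce, inside each $B_i$, a horizontal circle $\SI \times \{c_i\}$, and then take $P = \{c_1, \ldots, c_n\}$. By Remark~\ref{solidorpseudocurve} we may assume each $B_i$ is either a pseudo-curve or solid. Combining $F^n(B_i) = B_i$ with the uncoupled form of $F$ gives $f^n(B_i^\theta) = B_i^{R_\omega^n(\theta)}$ for every $\theta \in \SI$.

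In the pseudo-curve case, write $B_i = \overline{\Graph(\phi_i)}$ with $\phi_i \colon G_i \to \I$ continuous on a residual; the invariance yields $\phi_i \circ R_\omega^n = f^n \circ \phi_i$ on a residual. The projection $J_i := \pi_x(B_i)$ is a closed interval, $f^n$-strongly invariant, so $f^n|_{J_i}$ has a fixed point $y^*$ by the intermediate value theorem. Using the $F^n$-orbit of any $(\theta_0, \phi_i(\theta_0)) \in B_i$ together with density of the $R_\omega^n$-orbit of $\theta_0$, one shows $y^* \in \omega_{f^n}(\phi_i(\theta_0))$ and extracts some $\theta^* \in G_i$ with $\phi_i(\theta^*) = y^*$. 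Then $\phi_i^{-1}(y^*) \subset G_i$ is non-empty and $R_\omega^n$-invariant; minimality of $R_\omega^n$ forces its closure to be all of $\SI$, giving $\phi_i \equiv y^*$ and $B_i = \SI \times \{y^*\}$. The solid case is handled analogously by applying this rigidity to the bottom and top covers $B_i^{-}$ and $B_i^{+}$, which forces both $m_{B_i}$ and $M_{B_i}$ to be constant and $B_i = \SI \times [a_i, b_i]$.

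Once a horizontal circle lives in each $B_i$, pick $c_1$ to be a fixed point of $f^n$ inside $B_1$ and set $c_k := f^{k-1}(c_1)$. Then $P = \{c_1, \ldots, c_n\}$ is a periodic orbit of $f$; since the spatial order of the $c_i$ matches that of the $B_i$, the pattern of $P$ is $\tau$, and $\SI \times P \subset B$ by construction, so part~(a) finishes the proof. The main obstacle is the rigidity step: the heuristic is that a continuous semiconjugacy from a minimal rotation to a continuous interval map must be constant (IVT-fixed points pull back to rotation-invariant subsets, which are dense by minimality), but adapting this from continuous maps on $\SI$ to pseudo-curves (continuous only on a residual) requires combining $\omega$-limit arguments with minimality to secure the exact hit $\phi_i(\theta^*) = y^*$, and the solid case requires further care because $f^n$ need not be monotone on the fibers $[m_{B_i}(\theta), M_{B_i}(\theta)]$.
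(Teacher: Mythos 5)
Your part~(a) and the endgame of part~(b) (an $f^{n}$-fixed point inside a horizontal invariant interval, pushed around by $f$, with disjointness of the intervals giving period $n$ and pattern $\tau$) are exactly the paper's argument; the entire content of (b) is therefore the \emph{horizontality} of the strips $B_i$, which the paper deduces directly from the fact that $F^{n}$ is again an uncoupled skew product with $F^{n}(B_i)=B_i$. Your proposal tries to prove this rigidity in detail, and that is precisely where it has genuine gaps. In the pinched case, the step ``one shows $y^{*}\in\omega_{f^{n}}(\phi_i(\theta_0))$ and extracts some $\theta^{*}\in G_i$ with $\phi_i(\theta^{*})=y^{*}$'' is the heart of the matter and is only asserted. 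Two concrete problems: (i) the preliminary claim that the projection of $B_i$ to the second coordinate is a closed interval is unjustified --- a pseudo-curve need not be connected (take $\varphi$ locally constant with values $0$ and $1$ on the complementary arcs of a Cantor set: the closure of its graph is a pseudo-curve whose projection is $\{0,1\}$); in the paper this interval structure is a \emph{consequence} of horizontality, so invoking it before proving horizontality is circular, and without it the IVT fixed point $y^{*}$ and the identification of $\omega_{f^{n}}(\phi_i(\theta_0))$ with the projection both collapse. (ii) Even granting $y^{*}\in\omega_{f^{n}}(\phi_i(\theta_0))$, a subsequence $f^{nk_j}(\phi_i(\theta_0))\to y^{*}$ only gives angles $R_{\omega}^{nk_j}(\theta_0)$ accumulating at some $\theta^{\dagger}$ which need not lie in $G_i$; then all you learn is $(\theta^{\dagger},y^{*})\in B_i$, which is tautological, and the ``exact hit'' $\phi_i(\theta^{*})=y^{*}$ at a continuity point --- the only thing that makes your minimality-plus-density argument run --- is not obtained. (Nor is there any a priori reason an arbitrary IVT fixed point belongs to the $\omega$-limit set of the particular orbit of $\phi_i(\theta_0)$.)

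The solid case is worse: ``applying this rigidity to the bottom and top covers $B_i^{-}$ and $B_i^{+}$'' presupposes that these covers are $F^{n}$-invariant, which fails when $f^{n}$ is not monotone on the fibers; one only has $M_{_{B_i}}(R_{\omega}^{n}\theta)=\max f^{n}\bigl(B_i^{\theta}\bigr)$, which is not $f^{n}\bigl(M_{_{B_i}}(\theta)\bigr)$ in general (if $B_i=\SI\times\I$ and $f^{n}$ is the tent map, then $F^{n}(B_i^{+})=B_i^{-}$). So there is no invariant pseudo-curve to feed into the pinched-case argument, and constancy of $m_{_{B_i}}$ and $M_{_{B_i}}$ does not follow this way. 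You flag both difficulties yourself as ``the main obstacle'', and that self-assessment is accurate: as written, the proposal reduces the lemma to the same horizontality statement the paper uses, without actually establishing it. A minor additional point: with $c_k:=f^{k-1}(c_1)$ the point $c_k$ lies in $B_{\tau^{k-1}(1)}$ rather than $B_k$, so the orbit must be relabelled spatially (as the paper does) before one can read off that its pattern is $\tau$.
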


\begin{proof}
The first statement follows directly from the definition of a pattern.
Now we prove (b).
Let $B = \{B_{1},B_{2},\ldots,B_{n}\}$ be periodic orbit of strips of
$F$ with pattern $\tau$ (that is, $F(B_{i}) = B_{\tau(i)}$ for
$i=1,2,\dots,n$).
Since $F = (R_{\omega},f)$ it follows that $F^k = (R^k_{\omega},f^k)$
for every $k\in \N$ (so the iterates of $F$ are also uncoupled
quasiperiodically forced skew-products).
So, since $F^n(B_i) = B_i$ for every $i$, it follows that the
strips $B_i$ are horizontal. That is, for every $i$ there exists a
closed interval $J_i \subset \I$ such that $B_i = \SI \times J_i.$
Moreover, since the strips are pairwise disjoint, so are the
intervals $J_i$. Clearly, $f(J_{i}) = J_{\tau(i)}$ for every $i$ and,
hence, $f^{n}(J_{1}) = J_{1}.$
So, there exists a point $p_1 \in J_{1}$ such that
$f^{n}(p_1) = p_1$ and
$f^{k}(p_1) \in f^{k}(J_{1}) = J_{\tau^k(1)}$ for $k \ge 0.$
Since the intervals $J_i$ are pairwise disjoint,
the set $P = \{p_1, f(p_1),\dots, f^{n-1}(p_1)\}$ is a periodic orbit
of $f$ of period $n$ such that $\SI \times P \subset B.$
Moreover, if we set $f^{k}(p_1) = p_{\tau^k(1)}$ for
$k=1,2,\dots,n-1$, then $P$ has the spatial labelling and it follows
that the pattern of $P$ is $\tau.$
\end{proof}

\begin{proof}[Proof of Theorem~\ref{teo-prin-A}]
First we prove that $\tau \Longrightarrow_{\Omega} \nu$
implies $\tau \Longrightarrow_{\I} \nu.$
The assumption $\tau \Longrightarrow_{\Omega} \nu$
implies that every map $F \in \cSO$ that exhibits the strips
pattern~$\tau$ also exhibits the strips pattern~$\nu$.
In particular, the map $F_{\tau}$ has a periodic orbit of strips with
pattern~$\nu.$
By Lemma~\ref{patternequiv}, $f_{\tau}$ has a periodic orbit with
pattern~$\nu.$
Therefore, $\tau \Longrightarrow_{\I}\nu$ by the characterization of
the forcing relation in the interval (Theorem~\ref{carat-forc}).

Now we prove that $\tau \Longrightarrow_{\I} \nu$
implies $\tau \Longrightarrow_{\Omega} \nu.$
Clearly, we may assume that $\nu \ne \tau$.
We have to show that every $F \in \cSO$ that has a periodic orbit of strips
$B = \{B_0,B_1,\ldots,B_{n-1}\}$ with strips pattern $\tau$ also
has a periodic orbit of strips with strips pattern $\nu.$

We consider the map $f_{\tau} = f_P$ where $P$ is a periodic orbit
with pattern $\tau.$ By Theorem~\ref{carat-forc}, $f_{\tau}$ has
periodic orbit $Q = \{q_{0}, q_{1}, \dots, q_{n-1}\}$ with
pattern~$\nu.$ Since $Q$ has the spatial labelling, $q_0 = \min Q,$

Since $\nu \ne \tau$, by Lemma~\ref{lazo-unico}, there exists a simple
loop
\[
\alpha = I_{0} \signedarrow{s_0} I_{1}
 \signedarrow{s_1} \cdots \signedarrow{s_{n-2}} I_{n-1}
 \signedarrow{s_{n-1}}  I_0
\]
in the $P$-signed Markov graph of $f_{\tau}$ associated to $q_0.$
Moreover, by Definition~\ref{asoc-int},
\[
\begin{array}{lcl}
q_{0} &\sim&
  I_{0} \signedarrow{s_0} I_{1} \signedarrow{s_1} \cdots
        \signedarrow{s_{n-2}} I_{n-1} \signedarrow{s_{n-1}}  I_0 \\
f_{\tau}(q_{0}) &\sim&
  I_{1} \signedarrow{s_1} I_2 \signedarrow{s_2} \cdots
        \signedarrow{s_{n-1}} I_{0} \signedarrow{s_0} I_1\\
f_{\tau}^{2}(q_{0}) &\sim&
   I_{2} \signedarrow{s_2} I_3 \signedarrow{s_3} \cdots
        \signedarrow{s_{n-1}} I_{0} \signedarrow{s_0} I_1
        \signedarrow{s_1} I_2\\
\hspace{1em}\vdots && \hspace{8em}\vdots  \\
f_{\tau}^{n-1}(q_{0}) &\sim&
   I_{n-1} \signedarrow{s_{n-1}} I_0 \signedarrow{s_0} I_1
           \signedarrow{s_1} I_{2} \cdots
           \signedarrow{s_{n-2}} I_{n-1},
\end{array}
\]
where the symbol $\sim$ means ``associated with''.
By Remark~\ref{graphunique-strips} (see also
Remark~\ref{graphunique-int}), the above loop $\alpha$ also
exists in the $B$-signed Markov graph of $F$ by replacing the
basic intervals~$I_{i} = [q_i, q_{i+1}]$
by the basic bands~$I_{_{B_{i}B_{i+1}}}:$
\[
\alpha = I_{_{B_{0}B_{1}}} \signedarrow{s_0}
         I_{_{B_{1}B_{2}}} \signedarrow{s_1} \cdots
\signedarrow{s_{n-2}}
         I_{_{B_{n-2}B_{n-1}}} \signedarrow{s_{n-1}}
I_{_{B_{0}B_{1}}}.
\]
By Lemma~\ref{lazoasociadobandas} and Definition~\ref{asoc-band},
$F$ has a periodic band $Q_0$ associated to $\alpha$
(and hence of period $n$), and
\begin{align*}
Q_{0} &\sim&&\hspace*{-0.7em}
         I_{_{B_{0}B_{1}}} \signedarrow{s_0}
         I_{_{B_{1}B_{2}}} \signedarrow{s_1} \cdots
\signedarrow{s_{n-2}}
         I_{_{B_{n-2}B_{n-1}}} \signedarrow{s_{n-1}}
I_{_{B_{0}B_{1}}}\\
F(Q_{0}) &\sim&&\hspace*{-0.7em}
         I_{_{B_{1}B_{2}}} \signedarrow{s_1}
         I_{_{B_{2}B_{3}}} \signedarrow{s_2} \cdots
\signedarrow{s_{n-2}}
         I_{_{B_{n-2}B_{n-1}}} \signedarrow{s_{n-1}}
         I_{_{B_{0}B_{1}}}\signedarrow{s_0} I_{_{B_{1}B_{2}}}  \\
F^{2}(Q_{0}) &\sim&&\hspace*{-0.7em}
         I_{_{B_{2}B_{3}}} \signedarrow{s_2} \cdots
\signedarrow{s_{n-2}}
         I_{_{B_{n-2}B_{n-1}}} \signedarrow{s_{n-1}}
         I_{_{B_{0}B_{1}}} \signedarrow{s_0}
         I_{_{B_{1}B_{2}}} \signedarrow{s_1} I_{_{B_{2}B_{3}}} \\
\hspace{1em}\vdots &&& \hspace{8em}\vdots  \\
F^{n-1}(Q_{0}) &\sim&&\hspace*{-0.7em}
         I_{_{B_{n-2}B_{n-1}}} \signedarrow{s_{n-1}}
         I_{_{B_{0}B_{1}}} \signedarrow{s_0}
         I_{_{B_{1}B_{2}}} \signedarrow{s_1} \cdots
\signedarrow{s_{n-2}}
         I_{_{B_{n-2}B_{n-1}}}.
\end{align*}

By Lemmas~\ref{lazosintervalasociados} and \ref{simpleisdifferent},
the order of the points $f^{i}_{\tau}(q_{0})$ induces an order on
the shifts of the loop $S^{i}(\alpha),$ with the usual lexicographical
ordering and, by Lemma~\ref{lazoasociadobandas}, the order of the shifts
$S^{i}(\alpha)$ induces the same order on the bands $F^{i}(Q_{0}).$
Thus, for every $i,j \in \{0,1,\dots,n-1\},$ $i \ne j,$
$F^{i}(Q_{0}) < F^{j}(Q_{0})$ if and only if
$f_\tau^{i}(q_{0}) < f_\tau^{j}(q_{0}).$
So, $\{Q_{0},F(Q_0), F^2(Q_0),\dots,F^{n-1}(Q_0)\}$
and $\{q_{0}, q_{1}, \dots, q_{n-1}\}$ have the same pattern $\nu.$
This concludes the proof.
\end{proof}

\section{Proof of Theorems~\ref{teo-prin-B} and
\ref{teo-prin-C}}\label{ProofOfCandD}

We start by proving Theorem~\ref{teo-prin-B}.

The next technical lemma is inspired in \cite[Lemma~4.3.1]{ALM}.

\begin{lemma}\label{hor1}
Let $F \in \cSO$ and let $(J,\mathcal{D})$ be an $s$-horseshoe of
$F.$
Then, there exists $\mathcal{D}_n$,
a set of $s^{n}$ pairwise weakly ordered bands contained in $J,$
each of them with non-empty interior, such that
$(J,\mathcal{D}_n)$ is a $s^{n}$-horseshoe for $F^{n}.$
\end{lemma}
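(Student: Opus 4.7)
The plan is to induct on $n$. For the base case $n=1$ I would take $\mathcal{D}_1 := \mathcal{D}$, replacing each $L\in\mathcal{D}$, if necessary, by the sub-band $L^*\subset L$ with $L^*\signedarrowequal{s_L} J$ supplied by Lemma~\ref{prop-band}(a.3); the hypotheses of the horseshoe then directly yield the required properties, with the bands now contained in $J$. For the inductive step, assume that $\mathcal{D}_n = \{M_1,\dots,M_{s^n}\}$ has been constructed, each $M_i\subset J$ a band of non-empty interior, pairwise weakly ordered, with $M_i$ $F^n$-covering $J$ with some sign $s_{M_i}$.

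To produce $\mathcal{D}_{n+1}$, first apply Lemma~\ref{prop-band}(a.3) to each arrow $M_i \signedarrow[F^n]{s_{M_i}} J$ to obtain a band $M_i^*\subset M_i$ with $M_i^*\signedarrowequal[F^n]{s_{M_i}} J$; in particular $F^n(M_i^*)=J$. Since $J$ contains every $L\in\mathcal{D}$, Lemma~\ref{prop-band}(a.2) gives $M_i^*\signedarrow[F^n]{s_{M_i}} L$, and a second application of Lemma~\ref{prop-band}(a.3) yields a band $M_{i,L}\subset M_i^*$ with $M_{i,L}\signedarrowequal[F^n]{s_{M_i}} L$. Composing with the horseshoe arrow $L \signedarrow[F]{s_L} J$ gives
\[
M_{i,L} \signedarrow[F^{n+1}]{s_{M_i}\cdot s_L} J,
\]
so that $M_{i,L}$ $F^{n+1}$-covers $J$. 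Setting $\mathcal{D}_{n+1} := \{M_{i,L} : 1\le i\le s^n,\ L\in\mathcal{D}\}$ produces a family of $s^{n+1}$ sub-bands of $J$.

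The bulk of the verification is the pairwise weak ordering of $\mathcal{D}_{n+1}$. When $i\neq j$, the bands $M_{i,L}$ and $M_{j,L'}$ lie inside the weakly ordered $M_i$ and $M_j$ and inherit this order. When $i=j$ and $L\neq L'$, both $M_{i,L}$ and $M_{i,L'}$ sit inside $M_i^*$, and Lemma~\ref{prop-band}(a.4) applied to the arrows $M_i^*\signedarrow[F^n]{s_{M_i}} L$ and $M_i^*\signedarrow[F^n]{s_{M_i}} L'$ translates the weak ordering of $L$ and $L'$ into a weak ordering of $M_{i,L}$ and $M_{i,L'}$, reversed when $s_{M_i}=-$. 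Non-empty interiors propagate inductively: the band constructed by Lemma~\ref{prop-band}(a.3) can be chosen to have non-empty interior whenever the target does, and $J$ does by assumption.

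The main obstacle I expect is precisely the sign bookkeeping in this last step, since the order-reversals produced by Lemma~\ref{prop-band}(a.4) when $s_{M_i}=-$ have to be tracked consistently in order to deduce pairwise weak ordering of $\mathcal{D}_{n+1}$ from that of $\mathcal{D}_n$ and $\mathcal{D}$. A minor secondary issue is the ``contained in $J$'' clause at the base case, which is handled by the preliminary application of Lemma~\ref{prop-band}(a.3) mentioned at the outset.
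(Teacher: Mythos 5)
Your argument is essentially the paper's proof: the same induction on $n$, driven by Lemma~\ref{prop-band}(a.2,3) to produce the sub-bands and (a.4) plus the containment of sub-bands in weakly ordered bands to get the pairwise weak ordering; the only structural difference is that you factor $F^{n+1}$ as $F\circ F^{n}$ (refine the bands of $\mathcal{D}_n$ and compose with the original arrows $L\signedarrow[F]{s_L}J$), while the paper factors it as $F^{n}\circ F$ (refine the bands $D\in\mathcal{D}$ so that they map onto elements of $\mathcal{D}_n$), which is immaterial. One caveat: your preliminary ``fix'' of the base case does not do what you claim --- Lemma~\ref{prop-band}(a.3) yields $L^{*}\subset L$ with $F(L^{*})=J$, which in no way places $L^{*}$ inside $J$, so both your phrase ``with the bands now contained in $J$'' and the later step ``since $J$ contains every $L\in\mathcal{D}$'' rest on the tacit assumption that the horseshoe's bands lie in $J$; this is exactly the implicit convention the paper also uses (its proof just says that for $n=1$ there is nothing to prove, and its new bands $B(D,C)\subset D$ are only in $J$ if $D$ is), so it is not a gap relative to the paper, but you should state the assumption rather than pretend (a.3) delivers it.
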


\begin{proof}
We use induction. For $n = 1$ there is nothing to prove.

Suppose that the induction hypothesis holds for some $n$ and
let $D \in \mathcal{D}$ and $C \in \mathcal{D}_{n}.$
Since $C \subset J$ has non-empty interior and $D \signedarrow{\pm} J,$
by Lemma~\ref{prop-band}(a.2,3),
there exists a band $B(D,C) \subset D$ with non-empty interior
such that $B(D,C) \signedarrowequal{\pm} C.$
Moreover, given $C' \in \mathcal{D}_{n}$ with $C' \ne C,$
$B(D,C)$ and $B(D,C')$ can be chosen to be weakly ordered
because $C$ and $C'$ are weakly ordered by assumption.
Since, $C \in \mathcal{D}_{n},$ $B(D,C) \signedarrow[F^{n+1}]{\pm} J.$
Thus, the family
\[
\mathcal{D}_{n+1} =
\set{B(D,C)}{D \in \mathcal{D} \text{ and } C \in \mathcal{D}_n}
\]
consists of $s^{n+1}$ pairwise weakly ordered bands contained in $J,$
each of them with non-empty interior,
such that $B(D,C)$ $F^{n+1}$-covers $J.$
Consequently, $(J,\mathcal{D}_{n+1})$ is an $s^{n+1}$-horseshoe for
$F^{n+1}.$
\end{proof}

\begin{proof}[Proof of Theorem~\ref{teo-prin-B}]
Fix $n > 0.$
By Lemma~\ref{hor1}, $F^{n}$ has a $s^{n}$-horseshoe
$(J,\mathcal{D}).$
Remove the smallest and the biggest band of
$\mathcal{D}$ and call $K$ the smallest band that contains the
remaining elements of $\mathcal{D}.$
Clearly, $K$ is contained in the interior of $J.$
Thus, by Lemma~\ref{prop-band}(a.2,3), each element $D$ of
$\mathcal{D}$ contains in its interior a band $A(D)$
such that $A(D) \signedarrowequal[F^{n}]{\pm} K.$
Then there exists an open cover $\cB$ of the strip $J$
(formed by open sets $B$ such $B^{\theta}$
is an open interval for every $\theta \in \SI$),
such that for each $D \in \mathcal{D}\evalat{K},$
the band $A(D)$ intersects only one element $B(D)$ of $\cB$
(then it has to be contained in it) and if $D, D' \in
\mathcal{D}\evalat{K}$ with $D \ne D'$ then $B(D)\ne B(D').$
For $D_0,D_1,\ldots,D_{k-1} \in \mathcal{D}\evalat{K}$
the set
$\cap_{i = 0}^{k-1} F^{-n}(A(D_i))$ is non-empty and intersects only
one element of $\cB_{F^{n}}^{k},$
namely $\cap_{i = 0}^{k-1} F^{-n}(B(D_i)).$
Therefore the sets $\cap_{i = 0}^{k-1} F^{-n}(A(D_i))$ are different
for different sequences $(D_0,D_1,\ldots,D_{k-1}),$
and thus
\[
 \mathcal{N}(\cB_{F^{n}}^{k}) \ge (\Card \mathcal{D}-2)^{k},
\]
where $\mathcal{N}(\cB_{F^{n}}^{k})$ is defined as in
\cite[Section~4.1]{ALM}.
Hence,
\[
h(F) = \frac{1}{n} h(F^{n}) \ge
       \frac{1}{n} h(F^{n}, \cB) \ge
       \frac{1}{n} \log(\Card(\mathcal{D}) - 2) =
       \frac{1}{n} \log(s^{n}-2).
\]
Since $n$ is arbitrary, we obtain $h(F) \ge \log(s).$
\end{proof}

Now we aim at proving Theorem~\ref{teo-prin-C}.
To this end we have to introduce some more notation and preliminary
results concerning the \emph{topological entropy}.

Given a map $f \in \cSO$, $h(F\evalat{I_{\theta}})$ is defined for
every $I_{\theta} := \{\theta\} \times \I$
(despite of the fact that it is not $F$-invariant)
by using the Bowen definition of the topological entropy
(c.f. \cite{Bowen, BowenErr}).
Moreover, the Bowen Formula gives
\[
\max \{h(R), h_{\operatorname{fib}}(F) \} \le h(F) \le
    h(R) + h_{\operatorname{fib}}(F)
\]
where
\[
h_{\operatorname{fib}}(F) =
  \sup\nolimits_{\theta \in \SI} h(F\evalat{I_{\theta}}).
\]
Since $h(R) = 0$, it follows that
$h(F) = h_{\operatorname{fib}}(F).$

In the particular case of the uncoupled maps
$F_{\tau} = (R, f_\tau)$ we easily get the following result:

\begin{lemma}\label{Ft=ft}
Let $\tau$ be a pattern (both in $\I$ and $\Omega$). Then
$h(F_{\tau}\evalat{I_{\theta}}) = h (f_{\tau})$
for every $\theta \in \SI.$
Consequently,
\[
h(F_{\tau}) = h_{\operatorname{fib}}(F_{\tau}) = h (f_{\tau}).
\]
\end{lemma}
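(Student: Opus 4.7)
The plan is to reduce everything to a direct computation exploiting the uncoupled structure of $F_\tau$. Since $F_\tau(\theta,x) = (R_\omega(\theta), f_\tau(x))$ and the fiber map $f_\tau$ does not depend on $\theta$, a trivial induction yields
\[
F_\tau^k(\theta,x) = \bigl(R_\omega^k(\theta), f_\tau^k(x)\bigr)
\qquad\text{for every } k\ge 0.
\]
This is the single fact I will exploit.

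First I would recall the Bowen definition of $h(F_\tau\evalat{I_\theta})$ in terms of $(n,\varepsilon)$-separated subsets of $I_\theta = \{\theta\}\times \I$, using the product metric on $\Omega$. Fix $\theta\in\SI$ and consider two points $(\theta,x),(\theta,y)\in I_\theta$. Because the $\theta$-coordinates of $F_\tau^k(\theta,x)$ and $F_\tau^k(\theta,y)$ coincide for every $k$, the Bowen dynamical distance reduces to
\[
\max_{0\le k<n} d\bigl(F_\tau^k(\theta,x),F_\tau^k(\theta,y)\bigr)
   = \max_{0\le k<n} \abs{f_\tau^k(x)-f_\tau^k(y)}.
\]
Hence the map $(\theta,x)\mapsto x$ is an isometry from $I_\theta$ (with the Bowen metric of $F_\tau$) onto $\I$ (with the Bowen metric of $f_\tau$), and it transports $(n,\varepsilon)$-separated sets bijectively. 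Taking logarithmic growth rates gives $h(F_\tau\evalat{I_\theta}) = h(f_\tau)$ for every $\theta\in\SI$.

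Once the fibered equality is established, the second statement is immediate from the Bowen formula quoted just before the lemma. Since $R = R_\omega$ is an irrational rotation, $h(R) = 0$, so the bracketing inequalities collapse to
\[
h(F_\tau) = h_{\operatorname{fib}}(F_\tau) = \sup_{\theta\in\SI} h(F_\tau\evalat{I_\theta}) = h(f_\tau).
\]
There is no genuine obstacle here; the only point that needs a little care is the observation that $F_\tau\evalat{I_\theta}$ is \emph{not} a self-map of $I_\theta$, which is precisely why the Bowen definition (rather than the Adler--Konheim--McAndrew one) must be used. Everything else is a one-line identification of separated sets.
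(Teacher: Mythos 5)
Your argument is correct: the identity $F_\tau^k(\theta,x)=(R_\omega^k(\theta),f_\tau^k(x))$ makes the Bowen dynamical metrics on $I_\theta$ and on $\I$ coincide under $(\theta,x)\mapsto x$, so separated-set counts agree and the fibered equality holds, after which the quoted Bowen formula with $h(R_\omega)=0$ gives the conclusion. The paper states this lemma without proof as an easy consequence of the uncoupled structure, and your computation is exactly the intended argument, so there is nothing to add.
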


Given a signed Markov graph $G$ with vertices $I_1, I_2,\dots, I_n$
we associate to it a $n \times n$ \emph{transition matrix}
$T_G = (t_{ij})$ by setting $t_{ij} = 1$ if and only if
there is a signed arrow from the vertex $I_i$ to the
vertex $I_j$ in $G.$ Otherwise, $t_{ij}$ is set to 0.

The spectral radius of a matrix $T,$ denoted by $\rho(T),$ is equal
to the maximum of the absolute values of the eigenvalues of $T.$

\begin{lemma}\label{ent-mat}
Let $P$ be a periodic orbit of strips of $F \in \cSO$ and let
$T$ be the transition matrix of the $P$-signed Markov graph of $F.$
Then
\[
 h(F) \ge \max\{0, \log(\rho(T))\}.
\]
\end{lemma}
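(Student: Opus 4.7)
The plan is to use the growth rate of the number of loops in the $P$-signed Markov graph (which is governed by $\rho(T)$) to build horseshoes for iterates of $F$, then apply Theorem~\ref{teo-prin-B}.

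First I observe that if $\rho(T) \le 1$ then $\max\{0,\log \rho(T)\}=0$ and the bound is trivial, since $h(F) \ge 0$ always. So assume $\rho(T) > 1$. Since $T$ is a non-negative integer matrix whose entries $(T^n)_{ij}$ count paths of length $n$ from $I_i$ to $I_j$ in the signed Markov graph, standard results on non-negative matrices (Perron--Frobenius on an irreducible component of maximal spectral radius, combined with pigeonhole on the finite vertex set) yield a basic band $I_J$ (possibly depending on $n$) with
\[
(T^n)_{JJ} \ge \frac{\tr(T^n)}{k},\quad \text{and}\quad
\limsup_{n \to \infty} \frac{1}{n} \log \tr(T^n) = \log \rho(T),
\]
where $k$ is the number of basic bands.

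Fix such $n$ and $I_J$, and let $N_n := (T^n)_{JJ}$ denote the number of loops of length $n$ starting and ending at $I_J$. For each such loop $\alpha$, Remark~\ref{4horseshoe} produces a band $K_0(\alpha) \subset I_J$ with $K_0(\alpha) \signedarrowequal[F^n]{\text{\scalebox{.7}{$\Sign(\alpha)$}}} I_J$, and for distinct loops $\alpha \ne \beta$ (necessarily $\alpha^\infty \ne \beta^\infty$, since the loops have the same length and starting vertex), the bands $K_0(\alpha)$ and $K_0(\beta)$ have disjoint interiors. To upgrade disjointness of interiors to weak ordering, I choose the intermediate bands $K_i(\alpha)$ from the construction in Lemma~\ref{lazoasociadobandas} consistently across loops, and I argue by backward induction from the first index $k$ where $\alpha$ and $\beta$ diverge: at that index the bands $F^k(K_0(\alpha))$ and $F^k(K_0(\beta))$ are contained in two distinct, hence weakly ordered, basic bands $J_k^{\alpha}$ and $J_k^{\beta}$; then Lemma~\ref{prop-band}(a.4) propagates the weak ordering back through each shared prefix step, with the ordering possibly flipping depending on the signs $s_{k-1}, s_{k-2}, \ldots, s_0$ (which are common to $\alpha$ and $\beta$ since they are determined by the shared source vertices). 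Each $K_0(\alpha)$ has non-empty interior because it $F^n$-covers the basic band $I_J$, which itself has non-empty interior by Lemma~\ref{GoodBands}(c).

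This makes $\bigl(I_J, \{K_0(\alpha)\}_\alpha\bigr)$ an $N_n$-horseshoe for $F^n$, so by Theorem~\ref{teo-prin-B},
\[
n\,h(F) = h(F^n) \ge \log N_n \ge \log \tr(T^n) - \log k.
\]
Dividing by $n$ and taking the $\limsup$ gives $h(F) \ge \log \rho(T)$, which combined with the trivial lower bound $h(F)\ge 0$ yields $h(F) \ge \max\{0, \log \rho(T)\}$.

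The main obstacle will be justifying the pairwise weak ordering of the bands $\{K_0(\alpha)\}_\alpha$: the construction in Remark~\ref{4horseshoe} guarantees only disjoint interiors, so one must show that by making compatible choices at each step and carefully tracking sign changes along the shared prefix, the resulting bands fit together into a horseshoe configuration rather than merely an interior-disjoint collection.
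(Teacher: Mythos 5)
Your argument follows essentially the same route as the paper's proof: choose a basic band whose diagonal entry of $T^n$ is at least $\tr(T^n)/k$, convert the corresponding loops into a horseshoe for $F^n$ via Remark~\ref{4horseshoe}, apply Theorem~\ref{teo-prin-B}, and conclude with $\limsup_n \frac{1}{n}\log\tr(T^n)=\log\rho(T)$. Your additional care in upgrading pairwise disjoint interiors to pairwise weak ordering (by fixing the choices from Lemma~\ref{lazoasociadobandas} and propagating with Lemma~\ref{prop-band}(a.4)) fills in a point the paper leaves implicit when it invokes Remark~\ref{4horseshoe}, but it is the same approach.
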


\begin{proof}
If $\rho(T) \le 1$ then there is nothing to prove.
So, we assume that $\rho(T) > 1.$
Let $J$ be the $i$-th $P$-basic band and let $s$ be the $i$-th
entry of the diagonal of $T^{n}.$
By \cite[Lemma 4.4.1]{ALM} there are $s$ loops of length $n$ in the
$P$-signed Markov graph of $F$ beginning and ending at $J.$
Hence, if $s \ge 2,$ $F^{n}$ has an $s$-horseshoe $(J,\mathcal{D})$
by Remark~\ref{4horseshoe}. By Theorem~\ref{teo-prin-B},
$h(F) = \tfrac{1}{n} h(F^{n}) \ge \tfrac{1}{n} \log(s).$

If there are $k$ basic bands, the trace of $T^{n}$ is not larger
than $k$ times the maximal entry  on the diagonal of $T^{n}.$
Hence,
$
h(F) \ge \tfrac{1}{n} \log\left(\tfrac{1}{k} \tr(T^{n}) \right).
$
Therefore, by \cite[Lemma~4.4.2]{ALM},
\[
 h(F) \ge \limsup_{n\to\infty} \frac{1}{n}
          \log\left(\frac{1}{k}\tr(T^{n})\right) =
       \limsup_{n\to\infty} \frac{1}{n} \log(\tr(T^{n}) =
       \log(\rho(T)).
\]
\end{proof}

\begin{proof}[Proof of Theorem~\ref{teo-prin-C}]
Let $P$ be a periodic orbit of strips with pattern $\tau$ and let $T$
be the transition matrix of the $P$-signed Markov graph of $F.$
Let $f_\tau = f_Q$ be a $Q$-linear map in $\cI,$
where $Q$ is a periodic orbit of $f_Q$ with pattern $\tau.$
In view of Remark~\ref{graphunique-strips} (see also
Remark~\ref{graphunique-int}),
$T$ is also the transition matrix of the $Q$-signed Markov graph of
$f_\tau.$
Consequently, by \cite[Theorem~4.4.5]{ALM},
$
h(f_{\tau}) = \max \{0, \log\left(\rho(T)\right)\}.
$
By Lemmas~\ref{ent-mat} and \ref{Ft=ft},
\[
h(F) \ge \max\{0, \log(\rho(T))\} = h(f_{\tau}) = h(F_{\tau}).
\]
\end{proof}

\bibliographystyle{plain}
\bibliography{ForcingAnnulus}

\def\cprime{$'$}
\begin{thebibliography}{10}

\bibitem{ALM}
Llu{\'{\i}}s Alsed{\`a}, Jaume Llibre, and Micha{\l} Misiurewicz.
\newblock {\em Combinatorial dynamics and entropy in dimension one}, volume~5
  of {\em Advanced Series in Nonlinear Dynamics}.
\newblock World Scientific Publishing Co. Inc., River Edge, NJ, second edition,
  2000.

\bibitem{AMM}
Llu{\'{\i}}s Alsed{\`a}, Francesc Ma\~{n}osas, and Leopoldo Morales.
\newblock A skew-product map on the cylinder without fixed-curves.
\newblock preprint, 2014.

\bibitem{BGMY}
Louis Block, John Guckenheimer, Micha{\l} Misiurewicz, and Lai~Sang Young.
\newblock Periodic points and topological entropy of one-dimensional maps.
\newblock In {\em Global theory of dynamical systems ({P}roc. {I}nternat.
  {C}onf., {N}orthwestern {U}niv., {E}vanston, {I}ll., 1979)}, volume 819 of
  {\em Lecture Notes in Math.}, pages 18--34. Springer, Berlin, 1980.

\bibitem{BowenErr}
R.~Bowen.
\newblock Erratum to ``{E}ntropy for group endomorphisms and homogeneous
  spaces''.
\newblock {\em Trans. Amer. Math. Soc.}, 181:509--510, 1973.

\bibitem{Bowen}
Rufus Bowen.
\newblock Entropy for group endomorphisms and homogeneous spaces.
\newblock {\em Trans. Amer. Math. Soc.}, 153:401--414, 1971.

\bibitem{Choq}
Gustave Choquet.
\newblock {\em Topology}.
\newblock Translated from the French by Amiel Feinstein. Pure and Applied
  Mathematics, Vol. XIX. Academic Press, New York-London, 1966.

\bibitem{FJJK}
Roberta Fabbri, Tobias J{\"a}ger, Russel Johnson, and Gerhard Keller.
\newblock A {S}harkovskii-type theorem for minimally forced interval maps.
\newblock {\em Topol. Methods Nonlinear Anal.}, 26(1):163--188, 2005.

\bibitem{Nadler}
Sam~B. Nadler, Jr.
\newblock {\em Continuum theory}, volume 158 of {\em Monographs and Textbooks
  in Pure and Applied Mathematics}.
\newblock Marcel Dekker, Inc., New York, 1992.
\newblock An introduction.

\bibitem{Shar}
O.~M. {\v{S}}arkovs{\cprime}ki{\u\i}.
\newblock Co-existence of cycles of a continuous mapping of the line into
  itself.
\newblock {\em Ukrain. Mat. \u Z.}, 16:61--71, 1964.

\bibitem{Shartrans}
A.~N. Sharkovski{\u\i}.
\newblock Coexistence of cycles of a continuous map of the line into itself.
\newblock In {\em Thirty years after {S}harkovski\u\i's theorem: new
  perspectives ({M}urcia, 1994)}, volume~8 of {\em World Sci. Ser. Nonlinear
  Sci. Ser. B Spec. Theme Issues Proc.}, pages 1--11. World Sci. Publ., River
  Edge, NJ, 1995.
\newblock Translated by J. Tolosa, Reprint of the paper reviewed in MR1361914
  (96j:58058).

\end{thebibliography}
\end{document}